      \title{On Thompson's group T and algebraic K-theory}
     \author{Ross Geoghegan and Marco Varisco}
       \date{}
\newcommand*{\hurl}  [2][www.]{\href{http://#1#2}{\nolinkurl{#2}}}
\newcommand*{\hemail}[1]{\href{mailto:#1}{\nolinkurl{#1}}}
\newcommand*{\DOI}   [1]{\href{http://dx.doi.org/#1}{\nolinkurl{#1}}}
\newcommand*{\arXiv} [1]{\href{http://www.arxiv.org/abs/#1}{\nolinkurl{arXiv:#1}}}
\setlist{labelindent=\parindent, leftmargin=*, itemsep=0pt}
\numberwithin{equation}{section}
\theoremstyle{plain}
  \newtheorem{conjecture} [equation]{Conjecture}
  \newtheorem{corollary}  [equation]{Corollary}
  \newtheorem{lemma}      [equation]{Lemma}
  \newtheorem{proposition}[equation]{Proposition}
  \newtheorem{theorem}    [equation]{Theorem}
\theoremstyle{definition}
  \newtheorem{example}    [equation]{Example}
\DeclareMathAlphabet{\matheurm}      {U}{eur}{m}{n}
    \SetMathAlphabet{\matheurm}{bold}{U}{eur}{b}{n}
\newcommand*{\define}[5]{%
  \ifstrequal{#2}{*}{\expandafter#1\expandafter*}{\expandafter#1}%
  \csname#4#5\endcsname{#3{#5}}
}
\DeclareMathOperator*{\tensor}{\otimes}
\newcommand{\ts}{\textstyle}
\newcommand*{\TO}[1][]{\stackrel{#1}{\mathchoice{\longrightarrow}{\rightarrow}{\rightarrow}{\rightarrow}}}
\newcommand{\MOR}[4][]{#2\colon#3\TO[#1]#4}
\newcommand*{\rono}{\rho}       
\newcommand*{\Wh}{\mathit{Wh}}  
\newcommand*{\Fin}{{\mathcal{F}\mathrm{in}}}  
\newcommand*{\FinCyc}{{\mathcal{F}\mathrm{in}\mathcal{C}\mathrm{yc}}}
\begin{document}

\maketitle

\begin{abstract}
Using a theorem of L\"uck-Reich-Rognes-Varisco, we show that the Whitehead group of Thompson's group~$T$ is infinitely generated, even when tensored with the rationals.
To this end we describe the structure of the centralizers and normalizers of the finite cyclic subgroups of~$T$, via a direct geometric approach based on rotation numbers.
This also leads to an explicit computation of the source of the Farrell-Jones assembly map for the rationalized higher algebraic $K$-theory of the integral group ring of~$T$.
\end{abstract}


\section{Introduction and statement of results}\label{sec1}

Thompson's groups $F$ and $T$ are well-known groups having both type $F_\infty$ and infinite cohomological dimension.
Recall that $F$ and~$T$ can be defined as the groups of orientation-preserving dyadic piecewise-linear homeomorphisms of the closed unit interval $I=[0,1]$ and of the circle~$S^1=\IR/\IZ$; see Section~\ref{sec:T} (and \cite{CFP} for a comprehensive introduction).

Essentially nothing has been known about the algebraic $K$-theory of these groups.
Here we show that the Whitehead group of~$T$ is infinitely generated, even when tensored with the rationals.
More precisely, our main theorem is the following.

\begin{theorem}
\label{thm:K}
The Farrell-Jones assembly map in algebraic $K$-theory induces an injective homomorphism
\begin{equation}
\label{eq:main}
\colim_{k\in\IN}\Wh(C_k)\tensor_\IZ\IQ
\TO
\Wh(T)\tensor_\IZ\IQ
\end{equation}
and in particular $\Wh(T)\tensor_\IZ\IQ$ is an infinite dimensional $\IQ$-vector space.
\end{theorem}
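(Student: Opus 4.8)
The plan is to combine the geometric classification of the finite subgroups of~$T$ with two pieces of $K$-theoretic input. The first is L\"uck's equivariant Chern character, which rationally computes the source $H_*^T(E_{\Fin}T;\mathbf{K}\IZ)\tensor\IQ$ of the assembly map relative to the family~$\Fin$ of finite subgroups. The second is the theorem of L\"uck-Reich-Rognes-Varisco: for every group~$G$ the rationalized finite-subgroup assembly map
\[
H_n^G(E_{\Fin}G;\mathbf{K}\IZ)\tensor\IQ\TO K_n(\IZ G)\tensor\IQ,
\]
equivalently the composite of the relative assembly map with the Farrell-Jones assembly map, is split injective, the splitting being produced by means of the cyclotomic trace to topological cyclic homology. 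Granting these, it suffices to exhibit a natural injection $\colim_{k}\Wh(C_k)\tensor\IQ\hookrightarrow H_1^T(E_{\Fin}T;\mathbf{K}\IZ)\tensor\IQ$; composing it with the L\"uck-Reich-Rognes-Varisco injection produces an injective homomorphism $\colim_{k}\Wh(C_k)\tensor\IQ\hookrightarrow K_1(\IZ T)\tensor\IQ$, which unravels to the map induced by the inclusions $C_k\hookrightarrow T$ and which takes values in $\Wh(T)\tensor\IQ$ because~$T$ is simple, whence $\Wh(T)\tensor\IQ=K_1(\IZ T)\tensor\IQ$ (and likewise $\Wh(C_k)\tensor\IQ=K_1(\IZ C_k)\tensor\IQ$, the trivial units contributing only torsion). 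This is precisely~\eqref{eq:main}, so Theorem~\ref{thm:K} reduces to the construction of the injection together with the verification that its source is infinite dimensional.

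The main task is the geometric one. Using rotation numbers I would prove: every finite subgroup of~$T$ is cyclic; for each $k\ge1$ there is exactly one conjugacy class of subgroups of order~$k$, represented by the group~$C_k$ generated by a dyadic piecewise-linear homeomorphism of rotation number~$1/k$; the centralizer $Z_T(C_k)$ has finite abelianization (in fact it is a central extension of a simple Thompson-like group by the central subgroup~$C_k$), so that $H_0(BZ_T(C_k);\IQ)=\IQ$ and $H_1(BZ_T(C_k);\IQ)=0$; and the Weyl group $W_T(C_k)=N_T(C_k)/Z_T(C_k)$ is trivial. The triviality of the Weyl group is the decisive point and is where the rotation-number invariant enters: conjugation in the group of orientation-preserving homeomorphisms of~$S^1$ preserves rotation numbers, so any element of~$T$ normalizing~$C_k$ fixes its distinguished rotation-number-$1/k$ generator and therefore centralizes~$C_k$. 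The delicate part --- and the reason one wants a concrete geometric argument rather than an appeal to general structure theory --- is to control the interplay between the dyadic breakpoints of an element of~$T$ and the wrap-around point of a periodic map. I expect this to be the principal obstacle.

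With the geometry in hand, one feeds it into L\"uck's Chern character: in degree one the source decomposes as
\[
H_1^T(E_{\Fin}T;\mathbf{K}\IZ)\tensor\IQ\;\cong\;\bigoplus_{(C)}\ \bigoplus_{p+q=1}H_p(BZ_T(C);\IQ)\tensor_{\IQ[W_T(C)]}\Theta_C\bigl(K_q(\IZ C)\tensor\IQ\bigr),
\]
with $(C)$ ranging over conjugacy classes of finite cyclic subgroups and $\Theta_C$ the Artin-type idempotent isolating the part of $K_*(\IZ C)\tensor\IQ$ not induced from proper subgroups. Because $W_T(C)$ is trivial, $H_0(BZ_T(C);\IQ)=\IQ$, $H_1(BZ_T(C);\IQ)=0$, and $K_1(\IZ)\tensor\IQ=0$, all the $p=1$ contributions and the trivial-subgroup summand vanish, and the $p=0$, $q=1$ part of the source reduces to $\bigoplus_{(C)\ne1}\Theta_C\bigl(K_1(\IZ C)\tensor\IQ\bigr)$. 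A standard character computation identifies this with $\colim_{k}\Wh(C_k)\tensor\IQ$: since $\IQ C_n\cong\prod_{d\mid n}\IQ(\zeta_d)$, the group $K_1(\IZ C_n)\tensor\IQ=\Wh(C_n)\tensor\IQ$ splits as $\bigoplus_{d\mid n}\Theta_{C_d}\bigl(K_1(\IZ C_d)\tensor\IQ\bigr)$ compatibly with the inclusions $C_d\hookrightarrow C_n$, so that in the colimit over~$\IN$ ordered by divisibility each primitive piece occurs exactly once. This yields the required injection $\colim_{k}\Wh(C_k)\tensor\IQ\hookrightarrow H_1^T(E_{\Fin}T;\mathbf{K}\IZ)\tensor\IQ$, in fact an isomorphism onto a direct summand once one knows that there is a single conjugacy class of subgroups of each order; composing with the L\"uck-Reich-Rognes-Varisco injection gives~\eqref{eq:main}.

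Finally, $\colim_{k}\Wh(C_k)\tensor\IQ$ is infinite dimensional: by Bass's computation the rank of $\Wh(C_k)$ equals $\lfloor k/2\rfloor+1$ minus the number of divisors of~$k$, and this is unbounded; equivalently, the primitive summands $\Theta_{C_d}\bigl(K_1(\IZ C_d)\tensor\IQ\bigr)$, of rank $\varphi(d)/2-1$ for $d\ge3$, are nonzero for infinitely many~$d$. Since~\eqref{eq:main} is injective, $\Wh(T)\tensor\IQ$ is therefore an infinite dimensional $\IQ$-vector space as well, as claimed.
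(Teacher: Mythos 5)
Your proposal follows essentially the same route as the paper: rotation numbers give the classification of finite subgroups up to conjugacy, the equality $Z_TC=N_TC$ (triviality of the Weyl group, by exactly the argument you describe), and the extension $1\to C\to Z_TC\to T\to 1$; these are then fed into the L\"uck--Reich--Rognes--Varisco injectivity theorem, and the colimit over $\Sub_\Fin T$ is identified with $\colim_{k}\Wh(C_k)$ just as in the paper (your Artin-idempotent bookkeeping is an explicit version of the paper's cofinality argument).

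One point needs correction. The L\"uck--Reich--Rognes--Varisco theorem is not an unconditional statement ``for every group $G$'': its hypothesis is that $H_s(BZ_GC;\IZ)$ is finitely generated for \emph{every} $s\in\IN$ and every finite cyclic subgroup $C$, and this is needed for the cyclotomic-trace argument itself, not merely for identifying the source of the assembly map in degree one. Your verification only produces $H_0(BZ_TC;\IQ)=\IQ$ and $H_1(BZ_TC;\IQ)=0$, which does not suffice. The missing step is supplied by sharpening your own parenthetical observation: the quotient $Z_TC/C$ is not just ``a simple Thompson-like group'' but is isomorphic to $T$ itself, which is of type $F_\infty$ by Brown--Geoghegan; hence $Z_TC$ is of type $F_\infty$ and $H_s(BZ_TC;\IZ)$ is finitely generated in all degrees. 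With that added, your argument is complete and agrees with the paper's.
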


On the left-hand side of~\eqref{eq:main} $C_k=\IZ/k\IZ$ denotes the finite cyclic group of order~$k$.
It is well known that $\Wh(C_k)\tensor_\IZ\IQ\neq0$ for all~$k\not\in\{1,2,3,4,6\}$; see for example~\cite{Oliver}*{top of page~6}.
The colimit in~\eqref{eq:main} is taken over the poset~$\IN$ with respect to the divisibility relation, and the homomorphisms $\Wh(C_k)\TO\Wh(C_\ell)$ induced by $C_k=\IZ/k\IZ\TO C_\ell=\IZ/\ell\IZ$, $1\mapsto\frac{\ell}{k}$ whenever $k\mid\ell$.
The~map in~\eqref{eq:main} is induced by identifying~$C_k$ with the cyclic subgroup~$\langle\gamma_k\rangle$ of~$T$ generated by the pseudo-rotation of order~$k$ from Example~\ref{ex:pseudo-rot}; see the proof of Corollary~\ref{cor:K}.

Theorem~\ref{thm:K} is a direct application to~$T$ of the paper~\cite{LRRV}.
That work and its applicability here are discussed in Section~\ref{sec:K}, where we also obtain results about the higher algebraic $K$-theory groups of the integral group ring of~$T$.
The ingredients about~$T$ needed for this application are summarized in the following theorem.

\begin{theorem}
\label{thm:T}
Every finite subgroup of~$T$ is cyclic, and for every integer $k\geq0$ there is exactly one conjugacy class in~$T$ of cyclic subgroups of order~$k$.
Moreover, for every finite cyclic subgroup~$C$ of~$T$, the centralizer~$Z_TC$ and the normalizer~$N_TC$ of~$C$ in~$T$ are equal, and there is a short exact sequence
\begin{equation}
\label{eq:T}
1\TO C\TO Z_TC\TO T\TO 1
\,.
\end{equation}
\end{theorem}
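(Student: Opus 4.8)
The plan is to work throughout with Poincar\'e's rotation number $\rono(g)\in\IR/\IZ$ together with the piecewise-linear structure of~$T$. For the first assertion, let $G$ be a finite subgroup of~$T$ and let $V\subseteq S^1$ be the $G$-orbit of the union of the breakpoint sets of the elements of~$G$; this is a finite $G$-invariant set, and every element of~$G$ is affine on each closed arc determined by~$V$. Hence any element of~$G$ fixing~$V$ pointwise is affine and endpoint-fixing on each such arc, so it is the identity; therefore $G$ acts faithfully on the finite cyclically ordered set~$V$ preserving the cyclic order, which forces $G$ to be cyclic.

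For the conjugacy statement I first note that an element $g\in T$ of order $k\geq2$ has $\rono(g)=j/k$ with $\gcd(j,k)=1$, and that $\langle g\rangle$ acts freely on~$S^1$: a lift of~$g$ shows that $\rono(g)$ has denominator dividing~$k$, and were it a proper divisor then a proper power of~$g$ would have rotation number~$0$, hence a fixed point, hence be the identity since it has finite order --- contradicting that $g$ has order~$k$. Existence of a cyclic subgroup of every order~$k$ is Example~\ref{ex:pseudo-rot}. For uniqueness I reduce, using that $T$ acts transitively on cyclically ordered tuples of distinct dyadic points (which follows from the classical finite transitivity of~$F$; see~\cite{CFP}), to two order-$k$ elements $g$ and~$h$ with the same rotation number sharing a common $k$-point orbit $q_0<\dots<q_{k-1}$, on which both act by the same shift $q_i\mapsto q_{i+j}$ (the shift of a periodic orbit being its rotation number). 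Writing $g$ and~$h$ as cyclic tuples $(\phi_0,\dots,\phi_{k-1})$, $(\psi_0,\dots,\psi_{k-1})$ of dyadic piecewise-linear homeomorphisms between the consecutive arcs, one obtains a conjugating element of~$T$ by setting $\theta_0=\id$ on the first arc and propagating via $\theta_{m+1}=\psi_m\theta_m\phi_m^{-1}$; the recursion closes up automatically because $g^k=h^k=\id$, and the $\theta_m$ glue to a dyadic piecewise-linear homeomorphism conjugating $g$ to~$h$. Replacing a generator by a suitable power to match rotation numbers then yields the conjugacy of subgroups.

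Now fix a finite cyclic $C=\langle g\rangle$ of order $k\geq2$. If $n\in N_TC$ then $ngn^{-1}=g^a$, and since $\rono(g^a)=a\,\rono(g)$ must equal $\rono(ngn^{-1})=\rono(g)$ while $\rono(g)$ has exact denominator~$k$, we get $a\equiv1\pmod k$, so $N_TC=Z_TC$. Because $C$ acts freely by orientation-preserving homeomorphisms, $S^1/C$ is a circle and the quotient map is a $k$-fold covering; an explicit $k$-fold self-covering of~$\IR/\IZ$ that is piecewise linear with dyadic breakpoints and power-of-two slopes (as are its local inverses) and has the pseudo-rotation as a deck transformation is easy to exhibit, and the conjugacy result transports it to a covering $p\colon S^1\to\IR/\IZ$ with the same properties satisfying $p\circ g=p$. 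Each $f\in Z_TC$ commutes with~$g$ and descends through~$p$ to an orientation-preserving homeomorphism $\bar f$ of~$\IR/\IZ$; computing slopes by the chain rule and pushing breakpoints forward along~$p$ (which maps dyadic points to dyadic points) shows $\bar f\in T$, giving a homomorphism $Z_TC\to T$. Its kernel is~$C$, since a map descending to the identity preserves every fibre of~$p$ and is thus a deck transformation. It is surjective: any $\bar f\in T$ has degree~$1$, so it lifts along~$p$ to some $f\colon S^1\to S^1$, which is a dyadic piecewise-linear homeomorphism --- locally a composite of~$p$, $\bar f$ and a local inverse of~$p$ --- and which normalizes~$C$, hence lies in $Z_TC$ by the equality $N_TC=Z_TC$ already shown, and maps to~$\bar f$. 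This produces the short exact sequence~\eqref{eq:T}.

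I expect the last step to be the main obstacle: one must equip the quotient circle with a dyadic piecewise-linear structure compatible with the covering~$p$, so that both descending and lifting along~$p$ remain inside~$T$, and then carefully account for slopes and breakpoints under these two operations. The remaining ingredients are standard rotation-number theory and elementary piecewise-linear geometry.
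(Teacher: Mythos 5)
Your proposal is correct, and its overall skeleton --- rotation numbers for the conjugacy and $N_TC=Z_TC$ statements, and the quotient circle $S^1/C$ with its $k$-fold covering for the short exact sequence --- is the same as the paper's. Two sub-arguments differ in a way worth noting. For cyclicity, you use the finite $G$-invariant set $V$ of (orbits of) breakpoints and the fact that a finite group acting faithfully on a finite set by cyclic-order-preserving permutations is cyclic; this is a purely piecewise-linear argument, whereas the paper observes (via the torsion-freeness of $\mathrm{Homeo}^+(I)$) that every nontrivial finite-order element acts freely, so $S^1\to C\backslash S^1$ is a covering of circles and $C$ is cyclic by covering space theory --- an argument that works for the full homeomorphism group $H$, not just $T$. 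Your version has one small unaddressed degenerate case: $V$ could be empty (every element of $G$ a rotation by a dyadic angle), in which case there are no arcs to speak of; the fix is immediate since a finite group of rotations is cyclic. For the exact sequence, the paper identifies $C\backslash S^1$ with $\IR/\ell\IZ$ ($\ell$ the dyadic length of a fundamental arc) and rescales by $1/\ell$, while you transport an explicit dyadic PL $k$-fold self-covering of $S^1$ having the pseudo-rotation $\gamma_k$ as deck transformation (e.g.\ the map sending each interval of \eqref{eq:gamma} affinely onto $[0,1]$) along the conjugacy from Corollary~\ref{cor:conj}; this handles the dyadic structure on the quotient cleanly and is a perfectly good, arguably more concrete, alternative. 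Your conjugacy argument (transitivity of $T$ on cyclically ordered dyadic tuples plus propagating $\theta_{m+j}=\psi_m\theta_m\phi_m^{-1}$ around the orbit, closing up because $g^k=h^k=\id$) is essentially the paper's refinement-of-subdivisions argument in different clothing; just make sure the common orbit is chosen through a dyadic point so that transitivity applies.
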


We proved Theorem~\ref{thm:T} in 2007 and only lately became aware that essentially the same result (but without the observation about normalizers, which is important for our application) appeared in Matucci's 2008 thesis \cite{Matucci}*{Theorem~7.1.5}, and was subsequently generalized in~\citelist{\cite{MPN} \cite{MPMN}}.
The full details of our proof of Theorem~\ref{thm:T} are given in Section~\ref{sec:T}.

The group~$T$ is of type~$F_\infty$ by a theorem of Brown and Geoghegan; see for example~\cite{Brown}*{Remark~2 on page~56}, where this is shown to follow immediately from the same result for the group~$F$~\cite{BG}.
Thus the short exact sequence~\eqref{eq:T} of Theorem~\ref{thm:T} has the following corollary (see for example~\cite{book}*{Section~7.2}).

\begin{corollary}
\label{cor:T}
For every finite cyclic subgroup~$C$ of~$T$ the centralizer~$Z_T C$ of~$C$ in~$T$ is of type~$F_\infty$.
Moreover, for every~$s\in\IN$, the abelian group $H_s(BZ_T C;\IZ)$ is finitely generated, and $H_s(BZ_T C;\IQ)\cong H_s(BT;\IQ)$.
\end{corollary}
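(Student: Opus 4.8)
The plan is to derive all three assertions formally from the short exact sequence~\eqref{eq:T} of Theorem~\ref{thm:T} together with the fact, recalled just above, that~$T$ is of type~$F_\infty$. For the finiteness of~$Z_TC$ I would argue as follows: every finite group is of type~$F_\infty$ (for instance the bar construction on a finite group is a~$K(\pi,1)$ with only finitely many cells in each dimension), so in particular the finite cyclic group~$C$ is of type~$F_\infty$; and since the class of groups of type~$F_\infty$ is closed under extensions (see~\cite{book}*{Section~7.2}), the exact sequence $1\TO C\TO Z_TC\TO T\TO 1$ with both~$C$ and~$T$ of type~$F_\infty$ forces~$Z_TC$ to be of type~$F_\infty$ as well. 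Consequently~$Z_TC$ admits a classifying space~$BZ_TC$ with finite~$n$-skeleton for every~$n$, and reading off homology from the cellular chain complex of such a model shows immediately that $H_s(BZ_TC;\IZ)$ is a finitely generated abelian group for every~$s\in\IN$.

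For the rational statement I would feed~\eqref{eq:T} into the Lyndon--Hochschild--Serre spectral sequence $E^2_{p,q}=H_p\bigl(T;H_q(C;\IQ)\bigr)\Rightarrow H_{p+q}(Z_TC;\IQ)$. Since~$C$ is finite we have $H_q(C;\IQ)=0$ for~$q>0$ and $H_0(C;\IQ)=\IQ$, and since~$C$ is central in~$Z_TC$ by the very definition of the centralizer, the group~$T$ acts trivially on~$H_*(C;\IQ)$. Hence the spectral sequence is concentrated in the bottom row~$q=0$, collapses, and yields the isomorphism $H_s(BZ_TC;\IQ)\cong H_s(BT;\IQ)$ for all~$s$; equivalently, the classifying map $BZ_TC\TO BT$ of the quotient homomorphism is a rational homology equivalence because its homotopy fibre~$BC$ is rationally acyclic. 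I expect no real obstacle here: the substance of the corollary lies entirely in Theorem~\ref{thm:T} and in the Brown--Geoghegan theorem that~$T$ has type~$F_\infty$, and the only point meriting a one-line check is that the coefficient system in the spectral sequence is trivial --- which is automatic with~$\IQ$-coefficients and a \emph{finite} normal subgroup.
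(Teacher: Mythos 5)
Your argument is correct and is essentially the one the paper intends: the authors simply cite the Brown--Geoghegan theorem that $T$ has type~$F_\infty$ and refer to \cite{book}*{Section~7.2} for the fact that the extension $1\TO C\TO Z_TC\TO T\TO 1$ with $C$ finite then yields all three assertions, exactly as you spell out via extension-closure of~$F_\infty$ and the collapsing Lyndon--Hochschild--Serre spectral sequence. No discrepancies to report.
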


The rational homology groups~$H_s(BT;\IQ)$ of~$T$ (and hence also of~$Z_T C$) are completely known thanks to a theorem of Ghys and Sergiescu.
In fact, in~\cite{Ghys-Sergiescu}*{Corollaire~C on pages 187--188} it is proved that $H^1(T;\IZ)=0$, $H^2(T;\IZ)\cong\IZ\oplus\IZ$ with natural generators $\alpha$ and~$\chi$, and $H^*(T;\IQ)\cong\IQ[\alpha,\chi]$.


\sloppy
\section{Thompson's group~T and centralizers of \mbox{finite}
 subgroups}
\fussy
\label{sec:T}

In this section we recall the definition of Thompson's groups $F$ and~$T$, and then prove Theorem~\ref{thm:T};
see Theorem~\ref{thm:centralizers} and Corollary~\ref{cor:conj} below.

We say that an interval $I\subset\IR$ is \emph{dyadic} if its endpoints are dyadic rationals.
If $I$ and~$J$ are closed dyadic intervals, a homeomorphism $\MOR{f}{I}{J}$ is called \emph{dyadic piecewise linear}, or \emph{DPL} for short, if $f$ is piecewise linear, the breakpoints occur at dyadic rational points, and the slopes are integer powers of~$2$.
Notice that the inverse of a DPL homeomorphism is again DPL.
\emph{Thomspon's group~F} is defined as the group of orientation-preserving DPL homeomorphisms of~$[0,1]$.

We define an \emph{$\IR$-space} to be a pair $(X,p)$ where $X$ is a topological space and $\MOR{p}{\IR}{X}$ is a covering map.
In other words, $X$ is a connected $1$-dimensional manifold together with a chosen universal covering map~$p$.
We consider every $\IR$-space to be oriented via~$p$.
The primary example is of course $X=S^1=\IR/\IZ$ together with the usual universal covering map~$\MOR{u}{\IR}{S^1}$.

Let $(X,p)$ and $(Y,q)$ be $\IR$-spaces, and let $\MOR{f}{X}{Y}$ be a map.
We say that $f$ is \emph{locally DPL} (short for local dyadic piecewise linear homeomorphism) if for every~$x\in X$ there exist closed dyadic intervals $I$, $J$ in~$\IR$ such that:
\begin{itemize}
\item $p_{|I}$ and $q_{|J}$ are embeddings;
\item $x$ belongs to the interior of~$p(I)$ and $f(x)$ belongs to the interior of~$q(J)$;
\item $f$ induces a homeomorphism~$\MOR{f_{|p(I)}}{p(I)}{q(J)}$;
\item and the composition
\[
I\TO[p_{|}] p(I) \TO[f_{|}] q(J) \TO[q_{|}^{-1}] J
\]
is a DPL homeomorphism.
\end{itemize}

If $(X,p)$ is an~$\IR$-space, then we define $H(X,p)=H(X)$ to be the group of all orientation-preserving homeomorphisms of~$X$, and~$T(X,p)$ to be the subgroup of~$H(X,p)$ consisting of those orientation-preserving homeomorphisms that are locally DPL.
\emph{Thompson's group~$T$} is defined as~$T=T(S^1,u)$.
Similarly we write~$H=H(S^1,u)$.
Thompson's group~$F$ can then be identified with the subgroup of~$T$ fixing a base point.

\begin{example}[pseudo-rotations]
\label{ex:pseudo-rot}
The following elements of~$T$ play an important role in our work.
Given $q\geq2$ we denote by $\gamma_q\in T$ be the pseudo-rotation of order~$q$, i.e., the element of~$T$ (called~$C_{q-2}$ in~\cite{CFP}*{pages~236--237}) that cyclically permutes the images of the $q$ intervals
\begin{equation}
\label{eq:gamma}
\ts
\left[0,\frac{1}{2}\right],
\left[\frac{1}{2},\frac{3}{4}\right],
\ldots,
\left[1-\frac{1}{2^{j}},1-\frac{1}{2^{j+1}}\right],
\ldots,
\left[1-\frac{1}{2^{q-1}},1\right]
\end{equation}
and is affine on each of them.
\end{example}

The main result in this section is the following.

\begin{theorem}
\label{thm:centralizers}
Let $C$ be a finite subgroup of~$H$.
Then $C$ is cyclic, the centralizer~$Z_H C$ and the normalizer~$N_H C$ of~$C$ in~$H$ are equal, and there is a short exact sequence
\[
1\TO C\TO Z_H C \TO H \TO1
\,.
\]
Moreover, if $C<T$, then $Z_T C=N_T C$, and there is a short exact sequence
\[
1\TO C\TO Z_T C \TO T \TO1
\,.
\]
\end{theorem}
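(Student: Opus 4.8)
The plan is to work on the circle directly, using the rotation number for the two equalities $Z=N$ and the orbit space $S^1/C$ for the extensions. I would first note that a nontrivial orientation-preserving homeomorphism of~$S^1$ of finite order is fixed-point free: otherwise it would restrict to a nontrivial finite-order increasing self-homeomorphism of the complementary arc, which is impossible. Hence $C$ acts freely on~$S^1$. Averaging Lebesgue measure over~$C$ produces a $C$-invariant Borel probability measure of full support and without atoms, and integrating it from a basepoint gives a self-homeomorphism of~$S^1$ that conjugates the $C$-action onto a subgroup of the rotation group~$\IR/\IZ$; equivalently, the rotation number restricts to an injective homomorphism $\rono_{|C}\colon C\TO\IR/\IZ$, so $C$ is cyclic. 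Since $\rono$ is a conjugacy invariant on all of $\mathrm{Homeo}^+(S^1)$, if $\gamma$ generates~$C$ and $g\in N_H C$, then $g\gamma g^{-1}=\gamma^j$ for some~$j$, whence $\rono(\gamma^j)=\rono(\gamma)$ and therefore $\gamma^j=\gamma$; thus $g\in Z_H C$. This proves $Z_H C=N_H C$, and the identical argument applies inside~$T\subset\mathrm{Homeo}^+(S^1)$ to give $Z_T C=N_T C$.

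Next I would build the short exact sequence over~$H$. Let $\MOR{\pi}{S^1}{Y}$ be the quotient onto $Y:=S^1/C$; since $C$ is finite and acts freely, $Y$ is a circle and $\pi$ is a regular covering of degree $n:=|C|$ with deck group~$C$. Each $g\in Z_H C$ commutes with~$C$, hence permutes $C$-orbits and descends to $\bar g\in H(Y)$; this defines a homomorphism $\MOR{\Phi}{Z_H C}{H(Y)}$ whose kernel consists of the $g$ fixing every $C$-orbit, which by connectedness coincide with a single power of~$\gamma$, so $\ker\Phi=C$. For surjectivity, a given $h\in H(Y)$ lifts through~$\pi$---its induced map on $\pi_1 Y\cong\IZ$ is the identity and so preserves $\pi_*\pi_1 S^1=n\IZ$---to some $g\in H$ with $\pi g=h\pi$; then $\pi(gcg^{-1})=\pi$ for all $c\in C$, so $g\in N_H C=Z_H C$ by the previous paragraph and $\Phi(g)=h$. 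Identifying $H(Y)$ with~$H$ via an orientation-preserving homeomorphism $Y\cong S^1$ yields $1\TO C\TO Z_H C\TO H\TO1$.

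For the refinement to~$T$ I would equip~$Y$ with the $\IR$-space structure $p_Y:=\pi\circ u$ and keep track of dyadic data. The preimage of~$C$ in the group $\widetilde H=\{\phi\in\mathrm{Homeo}^+(\IR)\mid\phi\tau=\tau\phi\}$ of lifts of~$H$ (where $\tau(t)=t+1$) is a central extension of~$C$ by $\langle\tau\rangle\cong\IZ$; such an extension of a cyclic group is abelian, and it is torsion-free of rank~$1$, hence infinite cyclic, generated by some~$\delta$ with $\delta^n=\tau$. When $C<T$, a generator of~$C$ lies in~$T$, so its lift~$\delta$ is locally DPL and hence carries dyadic rationals to dyadic rationals. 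Moreover $\delta$ has no fixed points and (since $\delta^n=\tau$) satisfies $\delta(t)>t$, so for a dyadic~$c$ the interval $[c,\delta(c))$ is a fundamental domain for $\langle\delta\rangle$; choosing any DPL homeomorphism $[c,\delta(c)]\to[0,1]$ that sends $c$ and $\delta(c)$ to $0$ and~$1$ and extending it by the rule $\beta(\delta(t))=\beta(t)+1$ produces a DPL homeomorphism $\MOR{\beta}{\IR}{\IR}$ with $\beta\delta\beta^{-1}=\tau$. Hence $\beta$ descends to an isomorphism of $\IR$-spaces $(Y,p_Y)\cong(S^1,u)$, which identifies $T(Y,p_Y)$ with~$T$. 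Because $p_Y=\pi\circ u$, lifting and descending homeomorphisms along~$\pi$ preserves the property of being locally DPL, so $\Phi$ restricts to a surjection $Z_T C=Z_H C\cap T\TO T(Y,p_Y)=T$ with kernel $Z_T C\cap C=C$, giving $1\TO C\TO Z_T C\TO T\TO1$.

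The first paragraph and the topological surjectivity in the second are soft; I expect the main obstacle to be the dyadic bookkeeping in the last step---verifying that the piecewise formula for~$\beta$ really assembles into a DPL homeomorphism of~$\IR$ (dyadic, locally finite breakpoints and slopes that are powers of~$2$) and that ``locally DPL'' transfers along the covering~$\pi$ in both directions. These verifications are routine given the local nature of the definitions, but they are where the geometry of the situation actually has to be used.
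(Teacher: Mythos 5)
Your proof is correct and follows the same overall architecture as the paper's: the fixed-point-free observation, the quotient circle with its induced $\IR$-space structure, rotation numbers to get $N_HC=Z_HC$, the kernel computation by connectedness, and a dyadic identification of the quotient with the standard circle to close the $T$-case. Two local steps differ, both to your advantage in economy. First, you prove cyclicity by averaging Lebesgue measure and conjugating $C$ into the rotation group (equivalently, making $\rono_{|C}$ an injective homomorphism), whereas the paper deduces it from covering-space theory applied to $S^1\TO C\backslash S^1$; both are standard, and your route has the side benefit of making the injectivity of $\rono$ on $C$ available for the $N=Z$ step without invoking Proposition~\ref{prop:KH}\ref{KH-torsion}. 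Second, and more substantially, for surjectivity of $Z_HC\TO H(Y)$ the paper lifts $h_0$ by an explicit track-of-homotopy/loop-lifting argument and then separately verifies $hg=gh$ by comparing cyclic orders of orbits; you instead invoke the lifting criterion (using that an orientation-preserving homeomorphism of $Y$ acts as the identity on $\pi_1Y$), observe that any lift of a map commuting with the projection automatically normalizes the deck group, and then quote the already-proved equality $N_HC=Z_HC$ to get commutation for free. This replaces the longest computation in the paper's proof with a two-line covering-space argument. Your construction of the DPL conjugacy $\beta$ from a fundamental domain $[c,\delta(c)]$ of the generator $\delta$ of the lifted cyclic group is also a careful (arguably more robust) version of the paper's identification of $S^1_0$ with $\IR/\ell\IZ$ followed by multiplication by $1/\ell$; the remaining verifications you flag (that the equivariant extension of a DPL chart is DPL, and that locally DPL transfers along $\pi$ because $p_Y=\pi\circ u$) are exactly the local checks the paper also relies on.
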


The proof of Theorem~\ref{thm:centralizers} uses Poincar\'e rotation numbers.
We now recall their definition and basic properties, and we refer to~\cite{KH}*{Chapter~11} for more details and proofs.

Given $h\in H$, choose a lift $\MOR{\overline{h}}{\IR}{\IR}$ such that $u\overline{h}=hu$, and choose a point~$x\in\IR$.
Define
\[
\rono(h)=\IZ + \lim_{n\to\infty}\frac{\overline{h}^n(x)-x}{n} \in\IR/\IZ
\,
.
\]
Then $\rono(h)\in\IR/\IZ$ is independent of the choices of~$\overline{h}$ and~$x$ (see~\cite{KH}*{Proposition~11.1.1}), and it is called the \emph{rotation number} of~$h$.

\begin{proposition}
\label{prop:KH}
Let $h,g\in H$ and let $m$ be an integer.
\begin{enumerate}[label=(\roman*)]
\item\label{KH-rotation}
If $h(x)=x+\vartheta\mod\IZ$, i.e., if $h$ is a rotation by~$2\pi\vartheta$, then $\rono(h)=\vartheta$.
In particular, $\rono(\id_{S^1})=0$.
\item\label{KH-power}
$\rono(h^m)=m\rono(h)$.
\item\label{KH-conj}
$\rono(hgh^{-1})=\rono(g)$.
\item\label{KH-fixed}
If $\rono(h)=0$, then $h$ has a fixed point.
\item\label{KH-torsion}
If $h\neq\id_{S^1}$ has finite order, then $\rono(h)\in\IQ/\IZ$ and $\rono(h)\neq0$.
Let $\rono(h)=\frac{p}{q}$ with $(p,q)=1$ and~$0<p<q$.
Then the order of~$h$ is~$q$;
for every $x\in S^1$ the ordering of
$\bigl\{x, h(x), h^2(x), \ldots, h^{q-1}(x)\bigr\}$
in~$S^1$ is the same as that of
$\bigl\{0, \frac{p}{q}, \frac{2p}{q}, \frac{(q-1)p}{q}\bigr\}$;
and $h$ is conjugate to the rotation by~$2\pi\frac{p}{q}$.
\end{enumerate}
\end{proposition}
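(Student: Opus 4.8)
The plan is to work throughout with lifts to the universal cover and with the real-valued \emph{translation number}: for a homeomorphism $\MOR{\overline h}{\IR}{\IR}$ with $u\overline h=hu$, set $\widetilde{\rono}(\overline h)=\lim_{n\to\infty}\frac{\overline h^{\,n}(x)-x}{n}\in\IR$; by \cite{KH}*{Proposition~11.1.1} this limit exists and is independent of~$x$, and $\rono(h)$ is its class modulo~$\IZ$. Two elementary facts are used repeatedly: since $\overline h$ commutes with the translation $z\mapsto z+1$, the continuous function $z\mapsto\overline h(z)-z$ is $1$-periodic, hence bounded, and moreover $\frac{\overline h^{\,n}(z)-z}{n}\to\widetilde{\rono}(\overline h)$ uniformly in~$z$; and changing the lift alters $\widetilde{\rono}(\overline h)$ by an integer. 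Parts~\ref{KH-rotation}, \ref{KH-power} and~\ref{KH-conj} then follow at once. For~\ref{KH-rotation}, the rotation by~$2\pi\vartheta$ lifts to $z\mapsto z+\vartheta$, so $\overline h^{\,n}(x)-x=n\vartheta$ and $\rono(h)=\vartheta$; with $\vartheta=0$ this gives $\rono(\id_{S^1})=0$. For~\ref{KH-power}, $\overline h^{\,m}$ lifts $h^m$ and $\widetilde{\rono}(\overline h^{\,m})=m\,\widetilde{\rono}(\overline h)$: for $m\geq0$ by passing to the subsequence $(mn)_n$, and for $m<0$ because $\widetilde{\rono}(\overline h^{-1})=-\widetilde{\rono}(\overline h)$, which one obtains by applying the defining limit at the points $\overline h^{-n}(x)$ and using the uniformity just noted. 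For~\ref{KH-conj}, the map $\overline h\,\overline g\,\overline h^{-1}$ lifts $hgh^{-1}$, its $n$-th power equals $\overline h\,\overline g^{\,n}\,\overline h^{-1}$, and writing $w=\overline h^{-1}(x)$ we have
\[
\frac{\overline h\bigl(\overline g^{\,n}(w)\bigr)-x}{n}
=\frac{\overline h\bigl(\overline g^{\,n}(w)\bigr)-\overline g^{\,n}(w)}{n}
+\frac{\overline g^{\,n}(w)-w}{n}
+\frac{w-x}{n}\,,
\]
whose outer summands tend to~$0$ (the first because $z\mapsto\overline h(z)-z$ is bounded) and whose middle summand tends to~$\widetilde{\rono}(\overline g)$.

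For~\ref{KH-fixed}, pick the lift $\overline h$ with $\widetilde{\rono}(\overline h)=0$. If $\overline h$ had no fixed point, then $z\mapsto\overline h(z)-z$ would be continuous, $1$-periodic and nowhere zero, hence bounded away from~$0$; say $\overline h(z)-z\geq\varepsilon>0$ for all~$z$ (the other sign is symmetric). Then $\overline h^{\,n}(z)-z\geq n\varepsilon$, forcing $\widetilde{\rono}(\overline h)\geq\varepsilon>0$, a contradiction. So $\overline h$, and hence~$h$, has a fixed point.

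The substantial part is~\ref{KH-torsion}. First I would record an elementary observation, also needed in the proof of Theorem~\ref{thm:centralizers}: an orientation-preserving self-homeomorphism~$f$ of an open interval that has finite order is the identity, for if $f(z)\neq z$ for some~$z$, say $f(z)>z$, then monotonicity gives $f^{\,n}(z)>z$ for all~$n\geq1$. Consequently any finite-order element of~$H$ with a fixed point is the identity, since removing the fixed point exhibits it as such a homeomorphism of an open arc. Now let $h\neq\id_{S^1}$ have finite order, with $h^N=\id_{S^1}$. By~\ref{KH-power}, $N\rono(h)=\rono(h^N)=0$ in~$\IR/\IZ$, so $\rono(h)\in\IQ/\IZ$; and $\rono(h)\neq0$, for otherwise~\ref{KH-fixed} and the observation above would force $h=\id_{S^1}$. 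Write $\rono(h)=\frac pq$ with $(p,q)=1$ and~$0<p<q$. By~\ref{KH-power} again $\rono(h^k)=\frac{kp}{q}$, which vanishes in~$\IR/\IZ$ exactly when $q\mid k$; combined with the previous two sentences, $h^k=\id_{S^1}$ if and only if $q\mid k$, so $h$ has order exactly~$q$. Furthermore, choosing the lift $\overline h$ with $\widetilde{\rono}(\overline h)=\frac pq$, the map $\overline h^{\,q}$ lifts $h^q=\id_{S^1}$, so $\overline h^{\,q}(z)=z+c$ for a constant integer~$c$, and $c=\widetilde{\rono}(\overline h^{\,q})=q\cdot\frac pq=p$; thus $\overline h^{\,q}(z)=z+p$ for all~$z$. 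From this identity, together with the monotonicity of~$\overline h$ and its commuting with $z\mapsto z+1$, one deduces that for every~$x\in S^1$ the cyclic order of $x,h(x),\dots,h^{q-1}(x)$ on~$S^1$ agrees with that of $0,\frac pq,\frac{2p}q,\dots,\frac{(q-1)p}q$; this is Poincaré's combinatorial lemma, for which I would cite \cite{KH}*{Chapter~11}. Finally, using this ordering one constructs a homeomorphism of~$S^1$ taking the orbit $\{h^i(x)\}$ onto $\{\frac{ip}{q}\bmod\IZ\}$ in an order-preserving way and extending it affinely over the complementary arcs; it conjugates~$h$ to the rotation by~$2\pi\frac pq$. (Alternatively, one may invoke the classical fact that every finite-order orientation-preserving homeomorphism of~$S^1$ is conjugate to a rotation, whose rotation number must then be~$\frac pq$ by~\ref{KH-rotation}.)

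The step I expect to be the real obstacle is precisely the passage from the identity $\overline h^{\,q}(z)=z+p$ to the combinatorial ordering statement and thence to the conjugacy: extending an order-preserving bijection between two finite orbits to a homeomorphism of the circle is the content of the Poincaré argument rather than a one-line computation, and that is the piece I would outsource to \cite{KH}*{Chapter~11}. Everything else rests only on the boundedness of $z\mapsto\overline h(z)-z$ and the monotonicity of lifts, together with the elementary fact about finite-order maps of an interval.
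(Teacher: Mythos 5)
Your proof is correct and follows essentially the same route as the paper: parts (i)--(iv) and the rationality of $\rono(h)$ are handled by the standard translation-number arguments (which the paper simply cites from \cite{KH}), the order computation in (v) uses the same combination of (ii), (iv) and the fact that a finite-order element with a fixed point is the identity (the paper's Lemma~\ref{lem:torsion}), and both you and the paper defer the ordering statement and the conjugacy to a rotation to \cite{KH}*{Chapter~11} (specifically Proposition~11.2.1). The only difference is that you supply full details for several steps the paper outsources to \cite{KH}, and you deduce $\rono(h)\in\IQ/\IZ$ directly from (ii) rather than citing \cite{KH}*{Proposition~11.1.1}; both are fine.
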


\begin{proof}
Statements \ref{KH-rotation} and~\ref{KH-power} follow immediately from  the definition, whereas \ref{KH-conj} and~\ref{KH-fixed} are proved in~\cite{KH}*{Propositions 11.1.3 and~11.1.4}.

\ref{KH-torsion}
Let $h\neq\id_{S^1}$ have finite order.
From~\cite{KH}*{Proposition~11.1.1} we have that $\rono(h)\in\IQ/\IZ$.
From~\ref{KH-fixed} and Lemma~\ref{lem:torsion} below we conclude that $\rono(h)\neq0$.
So let $\rono(h)=\frac{p}{q}$ with $(p,q)=1$ and~$0<p<q$.
Suppose that the order of~$h$ is~$m$.
Then, using \ref{KH-rotation} and~\ref{KH-power}, $0=\rono(\id_{S^1})=\rono(h^m)=m\rono(h)=m\frac{p}{q}$, and therefore $q|m$ since $(p,q)=1$.
On the other hand $\rono(h^q)=q\rono(h)=q\frac{p}{q}=0\in\IR/\IZ$, and therefore from~\ref{KH-fixed} and Lemma~\ref{lem:torsion} we conclude that $h^m=\id_{S^1}$ and hence $m|q$.
So the order of~$h$ is~$q$.
The last statements then follow from~\cite{KH}*{Proposition~11.2.1}.
\end{proof}

\begin{lemma}
\label{lem:torsion}
If $h\in H$ has finite order and has a fixed point, then $h=\id_{S^1}$.
\end{lemma}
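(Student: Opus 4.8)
The plan is to pass to the universal cover $\MOR{u}{\IR}{S^1}$, where orientation-preserving homeomorphisms of~$S^1$ become strictly increasing self-homeomorphisms of~$\IR$. So suppose $h\in H$ has finite order, say $h^n=\id_{S^1}$ for some $n\geq1$, and fix a point $x_0\in S^1$ with $h(x_0)=x_0$. First I would pick $\widetilde{x}_0\in\IR$ with $u(\widetilde{x}_0)=x_0$ and take $\MOR{\overline{h}}{\IR}{\IR}$ to be the unique lift of~$h$ with $\overline{h}(\widetilde{x}_0)=\widetilde{x}_0$; since $h$ preserves orientation, $\overline{h}$ is a strictly increasing homeomorphism of~$\IR$.

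Next I would identify~$\overline{h}^n$. It is a lift of $h^n=\id_{S^1}$, so $\overline{h}^n(t)-t\in\IZ$ for every~$t$; as this quantity depends continuously on~$t$, it is a constant integer~$k$, and evaluating at~$\widetilde{x}_0$ gives $k=0$. Hence $\overline{h}^n=\id_\IR$.

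It then remains to invoke the elementary fact that a strictly increasing bijection $\MOR{g}{\IR}{\IR}$ with $g^n=\id_\IR$ for some $n\geq1$ must be~$\id_\IR$: if $g(t)>t$ for some~$t$, then iterating the increasing map~$g$ yields $t<g(t)<g^2(t)<\dots<g^n(t)$, contradicting $g^n(t)=t$, and the case $g(t)<t$ is symmetric, so $g=\id_\IR$. Applying this to $g=\overline{h}$ gives $\overline{h}=\id_\IR$, hence $h=\id_{S^1}$, as desired.

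I do not expect a genuine obstacle here: the only facts needing a line of justification are that an orientation-preserving homeomorphism of~$S^1$ lifts to a strictly increasing map of~$\IR$, and that a lift of~$\id_{S^1}$ differs from the identity by an integer translation. If one prefers to bypass lifts altogether, the same iteration argument runs directly on the open arc $S^1\setminus\{x_0\}$, on which $h$ restricts to an orientation-preserving self-homeomorphism and which is order-isomorphic to~$\IR$.
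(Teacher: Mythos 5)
Your proof is correct and rests on the same key fact as the paper's: an orientation-preserving homeomorphism of an interval (or of $\IR$) of finite order must be the identity, by the monotone iteration argument. The paper simply cuts the circle at the fixed point to get a homeomorphism of a closed interval and cites torsion-freeness of that homeomorphism group---exactly the lift-free variant you sketch in your final sentence.
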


\begin{proof}
If~$h$ has a fixed point, then $h$ induces an orientation-preserving homeomorphism of a closed interval.
Since the group of orientation-preserving homeomorphisms of a closed interval is torsion-free, if $h$ also has finite order then~$h=\id_{S^1}$.
\end{proof}
\pagebreak

\begin{corollary}
\label{cor:conj}
Any two cyclic subgroups of~$H$ (respectively, of~$T$) with the same order are conjugate in~$H$ (respectively, in~$T$).
\end{corollary}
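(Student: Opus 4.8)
The plan is to deduce everything from Proposition~\ref{prop:KH}\ref{KH-torsion}; here ``same order'' is read as the same \emph{finite} order~$q$ (if $q=1$ both subgroups are trivial and there is nothing to prove, so assume $q\geq2$), and I let $h$ be a generator of one of the two subgroups, an element of order~$q$.

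For~$H$: by Proposition~\ref{prop:KH}\ref{KH-torsion}, $h$ is conjugate in~$H$ to the rotation $R_{p/q}$ by~$2\pi\frac{p}{q}$ for some~$p$ coprime to~$q$, and since $(p,q)=1$ the cyclic subgroup $\langle R_{p/q}\rangle$ equals the group $\langle R_{1/q}\rangle$ of all rotations by~$2\pi\frac{j}{q}$. Hence every cyclic subgroup of~$H$ of order~$q$ is conjugate in~$H$ to the single model $\langle R_{1/q}\rangle$, and transitivity of conjugacy gives the statement for~$H$.

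For~$T$ one more step is needed, since rotations by non-dyadic rationals are not in~$T$. I would first prove the auxiliary statement that two elements of~$T$ of order~$q$ with the \emph{same} rotation number~$\frac pq$ are conjugate in~$T$, using the elementary fact that locally DPL homeomorphisms carry dyadic points to dyadic points. Given such an~$h$, choose a dyadic point $x_0\in S^1$; its $\langle h\rangle$-orbit then consists of~$q$ distinct dyadic points (distinct by Lemma~\ref{lem:torsion}), which by Proposition~\ref{prop:KH}\ref{KH-torsion} are cyclically ordered on~$S^1$ exactly as $0,\frac1q,\dots,\frac{q-1}{q}$, and therefore cut~$S^1$ into~$q$ closed dyadic arcs $A_0,\dots,A_{q-1}$ that $h$ permutes by the same cyclic shift by which $R_{p/q}$ permutes $[0,\frac1q],\dots,[\frac{q-1}{q},1]$. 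Running the same construction for~$g$ (which has the same rotation number, hence induces the same shift) yields dyadic arcs $B_0,\dots,B_{q-1}$. Now build $\phi\in T$ arc by arc: take any orientation-preserving DPL homeomorphism $A_0\to B_0$ --- one exists because any two closed dyadic intervals are DPL-homeomorphic --- and propagate it to the remaining arcs by the forced relation $\phi=g^{j}\phi h^{-j}$; since the shift is a $q$-cycle this prescribes~$\phi$ on each arc exactly once, the pieces agree at the $q$ junction points and close up after $q$ steps, and the result is an orientation-preserving locally DPL homeomorphism with $\phi h\phi^{-1}=g$. Finally, to pass from equal rotation numbers to equal orders, write $\rono(h)=\frac pq$ and $\rono(g)=\frac{p'}{q}$ with $(p,q)=(p',q)=1$ and pick~$j$ with $jp\equiv p'\pmod{q}$: then $h^{j}$ has order~$q$, so $\langle h^{j}\rangle=\langle h\rangle$, and $\rono(h^{j})=\frac{p'}{q}=\rono(g)$, so $h^{j}$ is conjugate to~$g$ in~$T$ by the auxiliary statement, whence $\langle h\rangle$ is conjugate to $\langle g\rangle$ in~$T$. (In particular every cyclic subgroup of~$T$ of order~$q\geq2$ is conjugate to $\langle\gamma_q\rangle$, the pseudo-rotation of Example~\ref{ex:pseudo-rot}.)

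The main obstacle is the explicit construction of~$\phi$ in the $T$-case, the only point where Proposition~\ref{prop:KH} no longer suffices. Two things need care: the bookkeeping showing that the pieces $g^{j}\phi h^{-j}$ are mutually consistent at the $q$ junction points and under the $q$-fold wrap-around --- which is exactly where coprimality of~$p$ and~$q$ is used --- and the input fact, standard for Thompson-type groups and proved by passing to a common dyadic subdivision, that any two closed dyadic intervals admit a DPL homeomorphism, which is what places the building blocks of~$\phi$ inside~$T$.
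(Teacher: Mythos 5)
Your argument is correct. The $H$-part is the same as the paper's: both reduce to Proposition~\ref{prop:KH}\ref{KH-torsion} via a generator conjugate to a rotation (the paper normalizes by choosing the generator with rotation number~$\frac{1}{q}$, you instead observe $\langle R_{p/q}\rangle=\langle R_{1/q}\rangle$; same thing). The $T$-part follows the same overall strategy --- cut $S^1$ into $q$ dyadic arcs along the orbit of a dyadic point, whose cyclic order is controlled by \ref{KH-torsion}, and build the conjugator arc by arc --- but you construct the conjugator differently. The paper conjugates $g$ directly to the model $\gamma_q$ by passing to a finer ($g$-invariant) dyadic subdivision on which $g$ is affine and mapping its segments affinely onto a corresponding subdivision of the intervals~\eqref{eq:gamma}; you instead start from an arbitrary orientation-preserving DPL identification $A_0\to B_0$ of one fundamental arc and propagate it by the forced equivariance $\phi=g^{j}\phi h^{-j}$. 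Your version trades the need to produce an invariant affine subdivision for the (correctly handled) bookkeeping at the $q$ junction points, where coprimality of $p$ and $q$ guarantees each arc is hit exactly once and the definition closes up; the consistency check and the verification that $\phi$ is locally DPL and satisfies $\phi h\phi^{-1}=g$ all go through as you indicate. Both routes rely on the same standard input that any two closed dyadic intervals are DPL-homeomorphic.
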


\begin{proof}
Let $C$ be a cyclic subgroup of~$H$ with order~$q$.
Proposition~\ref{prop:KH}\ref{KH-power} implies that $C$ has a unique generator~$g$ with rotation number~$\frac{1}{q}$, and \ref{KH-torsion} implies that $g$ is conjugate in~$H$ to the rotation by~$\smash{\frac{2\pi}{q}}$, and therefore the corollary is true for~$H$.
So assume that $g\in T$.
By Proposition~\ref{prop:KH}\ref{KH-torsion}, the ordering of
$\CO=\bigl\{0, g(0), g^2(0), \ldots, g^{q-1}(0)\bigr\}$
in~$S^1$ is the same as that of
$\bigl\{0, \frac{p}{q}, \frac{2p}{q}, \frac{(q-1)p}{q}\bigr\}$,
and so each $g^k(0)$ is a dyadic rational.
Think of~$\CO$ as a dyadic subdivision of~$S^1$.
Then there is a finer dyadic subdivision~$\CO'$ such that $g$ is affine on each segment of~$\CO'$.
Now let $\gamma_q\in T$ be the pseudo-rotation of order~$q$ from Example~\ref{ex:pseudo-rot}.
Define $\CO''$ to be the dyadic subdivision of~\eqref{eq:gamma} corresponding to~$\CO'$, so that $\gamma_q$ is also affine on each segment of~$\CO''$,
and define $h\in T$ to be the locally DPL homeomorphism that maps each segment of~$\CO'$ affinely onto the corresponding segment of~$\CO''$.
Then $hgh^{-1}=\gamma_q$, and therefore the corollary is also true for~$T$.
\end{proof}

We are now ready to prove Theorem~\ref{thm:centralizers}.

\begin{proof}[Proof of Theorem~\ref{thm:centralizers}]
Let $C$ be a finite subgroup of~$H$.
Assume that~$C\neq 1$, otherwise there is nothing to prove.
Define $S^1_0=C\backslash S^1$ to be the quotient, and denote by~$\MOR{q}{S^1}{S^1_0}$ the quotient map.

We first show that $C$ is cyclic.
By Lemma~\ref{lem:torsion}, if~$g\in C$ has a fixed point, then $g=\id_{S^1}$.
It follows that $\MOR{q}{S^1}{S^1_0}$ is a covering map and that $S^1_0$, being a closed $1$-dimensional manifold, is homeomorphic to~$S^1$, and therefore $C$ is cyclic by covering space theory.

Notice that $S^1_0$ together with the composition~$qu$ is an $\IR$-space.
We abbreviate $H_0=H(S^1_0,qu)$ and $T_0=T(S^1_0,qu)$.

Fix a generator~$g$ of~$C$.
By Proposition~\ref{prop:KH}\ref{KH-torsion}, we know that $\rono(g)=\frac{p}{q}\neq0$, with $(p,q)=1$ and $0<p<q$, and $q$ is the order of~$C$.
Let $s$ and $t$ be such that $sp+tq=1$.
Then $g^s(1)$ is the element in the orbit of~$1\in S^1$ coming directly after~$1$ in the cyclic order.
Let $\ell$ be the length in~$S^1$ of $[1,g^s(1)]$.
Then $S^1_0$ can be identified with $\IR/\ell\IZ$, and multiplication by $\frac{1}{\ell}$ induces a homeomorphism $\MOR{f}{S^1_0}{S^1}$.
It follows that conjugation by~$f$ yields an isomorphism $H_0\cong H$.
Moreover, if $g\in T$, then $\ell$ is a dyadic rational and therefore the homeomorphism~$f$ is locally DPL, so conjugation by~$f$ restricts to an isomorphism~$T_0\cong T$.

To prove that $N_H C=Z_H C$, let $h\in N_H C$ be given.
Then $hgh^{-1}=g^m$ for some integer~$m$.
By Proposition~\ref{prop:KH}\ref{KH-power}-\ref{KH-conj} we see that $\rono(g)=m\rono(g)$, and by Proposition~\ref{prop:KH}\ref{KH-fixed} and Lemma~\ref{lem:torsion} we see that $\rono(g)\neq0$.
Therefore $m=1$ and so $h\in Z_H C$.
Since obviously $Z_H C\leq N_H C$, we conclude that~$N_H C=Z_H C$.

Since $Z_H C$ acts on the quotient~$S^1_0=C\backslash S^1$, we get a group homomorphism~$\MOR{\pi}{Z_H C}{H_0}$.
We are going to show next that there is a short exact sequence
\begin{equation}
\label{eq:ses-H}
1\TO C\TO Z_H C \TO[\pi] H_0 \TO1
\,,
\end{equation}
i.e., that $\ker\pi=C$ and that $\pi$ is surjective.
Since $H_0\cong H$, as observed above, this will prove the first part of the theorem.

To show that $\ker\pi=C$, let $h\in\ker\pi$ be given.
Then for any $x\in S^1$,\linebreak $h(x)=g^{m(x)}(x)$ for some integer~$m(x)$.
By continuity and since $S^1$ is connected, it follows that~$m(x)$ is constant, i.e., that $h\in C$.
Since obviously $C\leq\ker\pi$, we conclude that $\ker\pi=C$.

To show that $\pi$ is surjective, let $h_0\in H_0$ be given.
Choose a basepoint $x_0\in S^1_0$ and define $y_0=h_0(x_0)$.
Since $h_0$ is freely homotopic to~$\id_{S^1_0}$, choose such a homotopy and let $\alpha$ be the track of this homotopy at~$x_0$; $\alpha$ is then a path from $x_0$ to~$y_0$.
It follows that given any loop~$\omega$ at~$x_0$, $\omega$ is homotopic to~$\alpha\cdot(h_0\omega)\cdot\alpha^{-1}$ relative to~$x_0$.
Now choose $x\in S^1$ such that $q(x)=x_0$, and let $\widetilde{\alpha}$ be the lift of~$\alpha$ starting at~$x$.
Define $y=\widetilde{\alpha}(1)$ and $y_0=q(y)=\alpha(1)$.

We want to show that $h_0$ lifts to a homeomorphism $h\in H$ such that:
\begin{equation}
\label{eq:lift}
qh=h_0 q
\quad\text{and}\quad
h(x)=y
\,.
\end{equation}
We first show that $\MOR{h_0 q}{S^1}{S^1_0}$ lifts to a map $\MOR{h}{S^1}{S^1}$ satisfying~\eqref{eq:lift}.
It follows then easily that~$h\in H$.

By covering space theory, it is enough to show that if $\gamma$ is any loop in~$S^1$ at~$x$ then there is a loop~$\sigma$ in~$S^1$ at~$y$ such that $h_0q\gamma$ is homotopic to~$q\sigma$ relative to~$y_0$.
Given~$\gamma$, since $q\gamma$ is homotopic to~$\alpha\cdot(h_0q\gamma)\cdot\alpha^{-1}$ relative to~$x_0$ and $q\gamma$ lifts to a loop at~$x$, it follows that $\alpha\cdot(h_0q\gamma)\cdot\alpha^{-1}$ lifts to a loop~$\tau$ at~$x$.
Let $\sigma$ be the lift of~$h_0q\gamma$ at~$y$.
We claim that $\sigma$ is a loop.
Indeed, if $\sigma(1)=g^m y$ for some integer~$m$, then $\tau(1)=g^m x$.
Hence $m=0$ and $\sigma$ is a loop, as claimed.
Therefore $h_0$ lifts to an~$h\in H$ satisfying~\eqref{eq:lift}.

It only remains to show that $h$ commutes with~$g$, i.e., $h\in Z_H(C)$.
Let $x\in S^1$.
Let $\rono(g)=\frac{p}{q}$ with $(p,q)=1$.
By Proposition~\ref{prop:KH}\ref{KH-torsion} we know that the cyclic order of
$Cx=\bigl\{x, gx, g^2x, \ldots, g^{q-1}x\bigr\}$
in~$S^1$ is the same as that of
$\bigl\{0, \frac{p}{q}, \frac{2p}{q}, \frac{(q-1)p}{q}\bigr\}$.
Since $h$ preserves cyclic order, it follows that the cyclic orders of
\begin{align*}
Ch(x)=\,&\bigl\{h(x), gh(x), g^2h(x), \ldots, g^{q-1}h(x)\bigr\}\,,
\\
h\bigl(Cx\bigr)=\,&\bigl\{h(x), h(gx), h(g^2x), \ldots, h(g^{q-1}x)\bigr\}\,,
\\
\text{and}\quad
&\bigl\{\ts0, \frac{p}{q}, \frac{2p}{q}, \frac{(q-1)p}{q}\bigr\}
\end{align*}
are all the same.
Since $qh=h_0q$, $h$ sends orbits to orbits, and therefore we have $Ch(x)=h\bigl(Cx\bigr)$, from which it follows that $gh(x)=h(gx)$.

Finally, assume that $C<T$.
Since $Z_T C=T\cap Z_H C$ and $N_T C=T\cap N_H C$, it is now clear that $Z_T C=N_T C$.
For all $h\in H$, since membership in~$T(X,p)$ is a local property, $h\in T=T(S^1,u)$ if and only if~$\pi(h)\in T_0=T(S^1_0,qu)$, therefore \eqref{eq:ses-H} induces a short exact sequence
\begin{equation}
\label{eq:ses-T}
1\TO C\TO Z_T C \TO[\pi_{|}] T_0 \TO1
\,.
\end{equation}
But as observed above, $T_0\cong T$, and so the theorem is proved.
\end{proof}


\section{Assembly maps and algebraic K-theory of~T}
\label{sec:K}

In this last section we review assembly maps and isomorphism conjectures in algebraic $K$-theory, focusing on the rationalized case and referring the reader to~\citelist{\cite{LR} \cite{Lueck-ICM}} for comprehensive surveys.
Then we explain the main results of~\cite{LRRV} and how they imply Theorem~\ref{thm:K} as well as a generalization to higher algebraic $K$-theory.

Let $G$ be a discrete group.
The algebraic $K$-theory groups~$K_n(\IZ G)$ of the integral group ring of~$G$ play a central role in geometric topology, in particular in the classification of high-dimensional manifolds and their automorphisms.
Arguably the most important $K$-theoretic invariant is the \emph{Whitehead group}~$\Wh(G)$, which classifies high-dimensional $h$-cobordisms, and which is defined as the quotient of~$K_1(\IZ G)=\bigl(\bigcup_{k\in\IN}GL_k(\IZ G)\bigr)_{\mathrm{ab}}$ by the image of the $1$-by-$1$ invertible matrices~$(\pm g)$, $g\in G$.
The following conjecture is one of the most well-known and consequential open problems in this area.

\begin{conjecture}
\label{conj:Wh}
If $G$ is torsion-free, then $\Wh(G)=0$.
If $G$ has torsion, then the inclusions of finite subgroups~$H$ of~$G$ induce an injective homomorphism
\begin{equation}
\label{eq:Wh}
\colim_{H\in\Sub_\Fin G}\Wh(H)\tensor_\IZ\IQ\TO\Wh(G)\tensor_\IZ\IQ
\,.
\end{equation}
\end{conjecture}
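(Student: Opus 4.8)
Conjecture~\ref{conj:Wh} is a famous open problem, so what follows is a proof \emph{strategy} rather than a proof, together with a pointer to the one step that is genuinely out of reach. The plan is to derive it from the Farrell-Jones Isomorphism Conjecture in algebraic $K$-theory, which predicts that for every discrete group~$G$ the assembly map relative to the family~$\mathcal{VC}\mathrm{yc}$ of virtually cyclic subgroups,
\[
H^G_n\bigl(\underline{\underline{E}}G;\IK_\IZ\bigr)\TO K_n(\IZ G)
\,,
\]
is an isomorphism for all~$n\in\IZ$; here $\IK_\IZ$ is the algebraic $K$-theory $\Or(G)$-spectrum, characterized by $H^G_n(G/H;\IK_\IZ)\cong K_n(\IZ H)$ for every subgroup~$H$, and $\underline{\underline{E}}G=E_{\mathcal{VC}\mathrm{yc}}(G)$ is the classifying space of the family. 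First I would replace $\IK_\IZ$ by its reduced, or Whitehead, version: a connective $\Or(G)$-spectrum $\mathbf{Wh}_\IZ$ with $\pi_0\mathbf{Wh}_\IZ(G/H)=0$ and $\pi_1\mathbf{Wh}_\IZ(G/H)=\Wh(H)$, obtained from $\IK_\IZ$ by passing to $1$-connective covers and dividing out the trivial units in degree one. Granting the Farrell-Jones Conjecture, a standard argument then identifies $H^G_1(\underline{\underline{E}}G;\mathbf{Wh}_\IZ)$ with~$\Wh(G)$.

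The first substantive step is to pass from the virtually cyclic family to the family~$\Fin$ of finite subgroups. By a theorem of Bartels, the relative assembly map $H^G_\ast(\underline{E}G;\IK_\IZ)\to H^G_\ast(\underline{\underline{E}}G;\IK_\IZ)$ from the finite to the virtually cyclic family is split injective, and the same holds for its $\mathbf{Wh}_\IZ$-version; so it suffices to produce an injection $\colim_{H\in\Sub_\Fin G}\Wh(H)\hookrightarrow H^G_1(\underline{E}G;\mathbf{Wh}_\IZ)$. Now the $G$-equivariant Atiyah-Hirzebruch spectral sequence converging to $H^G_\ast(\underline{E}G;\mathbf{Wh}_\IZ)$ has $E^2$-page the Bredon homology groups $H^G_p(\underline{E}G;\underline{\pi_q\mathbf{Wh}_\IZ})$, and these vanish for $p<0$, because a $G$-CW complex has no Bredon homology in negative degrees, and for $q<1$, by connectivity of~$\mathbf{Wh}_\IZ$. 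Hence the corner $E^2_{0,1}$ admits no nonzero differential into or out of it, so $E^\infty_{0,1}=E^2_{0,1}$, and by convergence this is the bottom filtration stage of~$H^G_1(\underline{E}G;\mathbf{Wh}_\IZ)$, hence a subgroup of it. Since $\pi_1\mathbf{Wh}_\IZ(G/H)=\Wh(H)$, the standard description of Bredon $H_0$ as a colimit over the subgroup category gives $E^2_{0,1}=H^G_0(\underline{E}G;\underline{\Wh})\cong\colim_{H\in\Sub_\Fin G}\Wh(H)$, and composing the inclusions yields the injection~\eqref{eq:Wh}. For torsion-free~$G$ everything collapses: $\Fin$ is the trivial family, the passage from $\Fin$ to~$\mathcal{VC}\mathrm{yc}$ only introduces the Nil-groups $NK_n(\IZ)$, which vanish because~$\IZ$ is regular, so $\Wh(G)\cong H_1(BG;\mathbf{Wh}_\IZ(\mathrm{pt}))$, and this is~$0$ since $\mathbf{Wh}_\IZ(\mathrm{pt})$ has $\pi_0=0$ and $\pi_1=\Wh(1)=0$.

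The main obstacle is the hypothesis itself: the Farrell-Jones Conjecture is unknown for general~$G$, and removing it is exactly what Conjecture~\ref{conj:Wh} demands. It has, however, been verified for very large classes of groups --- among them hyperbolic groups, finite-dimensional CAT(0) groups, virtually solvable groups, lattices in virtually connected Lie groups, and everything built from these by the usual inheritance operations: directed unions, passage to subgroups, finite products, free products, and suitable extensions --- and for every group in this class the reduction above applies and proves the conjecture. By contrast, everything beyond the Farrell-Jones input is essentially formal: the split injectivity of the relative assembly is a theorem of Bartels, the collapse at $E^2_{0,1}$ is forced by connectivity, and the identification of that corner with $\colim_{H\in\Sub_\Fin G}\Wh(H)$ is a standard property of equivariant homology. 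The real difficulty is therefore concentrated in the geometry needed to verify the Farrell-Jones Conjecture for a given~$G$, which is precisely why the result for~$T$ in this paper must go through the structural input of Theorem~\ref{thm:T} and the machinery of~\cite{LRRV}, rather than through Conjecture~\ref{conj:Wh} in full generality.
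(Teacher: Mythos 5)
You correctly identify that this statement is a \emph{conjecture}: the paper states it as one of the major open problems in the area and offers no proof of it, so there is nothing in the paper to compare your argument against. Your conditional reduction from the Farrell--Jones Conjecture --- Bartels' split injectivity of the relative assembly map from the finite to the virtually cyclic family, the connectivity/corner argument in the equivariant Atiyah--Hirzebruch spectral sequence, and the identification of the Bredon $H_0$ term with $\colim_{H\in\Sub_\Fin G}\Wh(H)$ --- is the standard and essentially correct one, and you rightly observe that the paper's actual result for $T$ does not go through this conjecture but through the unconditional rational injectivity theorem of L\"uck--Reich--Rognes--Varisco.
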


The colimit in~\eqref{eq:Wh} is taken over the finite subgroup category~$\Sub_\Fin G$, whose objects are the finite subgroups $H$ of~$G$ and whose morphisms are defined as follows.
Given subgroups $H$ and~$K$ of~$G$, let $\conhom_G(H,K)$ be the set all group homomorphisms $H\TO K$ given by conjugation by an element of~$G$.
The group $\inn(K)$ of inner automorphisms of~$K$ acts on~$\conhom_G(H,K)$ on the left by post-composition.
The set of morphisms in~$\Sub_\Fin G$ from $H$ to~$K$ is then defined as the quotient~$\inn(K)\backslash\conhom_G(H,K)$.
For example, in the special case when $G$ is abelian, then $\Sub_\Fin G$ is just the poset of finite subgroups of~$G$ ordered by inclusion.
Equivalently, the colimit in~\eqref{eq:Wh} could be taken over the restricted orbit category~$\Or_\Fin G$, which has as objects the homogeneous $G$-sets~$G/H$ for any finite subgroup~$H$ of~$G$, and as morphisms the $G$-equivariant maps.
The relation between $\Sub_\Fin G$ and $\Or_\Fin G$ and the equivalence of the two approaches is explained, for example, in~\cite{LRV}*{page~152, Lemma~3.11}.

Conjecture~\ref{conj:Wh} is known to be true for all Gromov hyperbolic groups \cite{BLR} and all CAT(0)-groups~\cite{BL}, for example.
One of the most interesting open cases of Conjecture~\ref{conj:Wh} is Thompson's group~$F$: is~$\Wh(F)=0$?
Our main result, Theorem~\ref{thm:K}, is that for Thompson's group~$T$ Conjecture~\ref{conj:Wh} is true.

Before explaining this, we want to discuss how Conjecture~\ref{conj:Wh} is a special case of the more general Farrell-Jones Conjecture in algebraic $K$-theory.
This conjecture asserts that certain \emph{assembly maps} are isomorphisms.
The targets of the assembly maps are the algebraic $K$-theory groups~$K_n(\IZ G)$ that we are interested in.
The sources are other groups that are easier to compute and homological in nature, and that only depend on the algebraic $K$-theory of relatively ``small'' subgroups of~$G$.
The construction of these assembly maps is rather technical, and we will not explain it here---see e.g.~\cites{LR, LRRV} for details.
But the picture simplifies after rationalizing, i.e., after tensoring with~$\IQ$, and we are going to focus on it now.

The \emph{rationalized classical assembly map} for~$K_n(\IZ G)$, $n\in\IZ$, is a homomorphism
\begin{equation}
\label{eq:classical}
\bigoplus_{\substack{s,t\geq0\\s+t=n}}
H_s(BG;\IQ)\tensor_\IQ(K_t(\IZ)\tensor_\IZ\IQ)
\TO
K_n(\IZ G)\tensor_\IZ\IQ
\,.
\end{equation}
The \emph{rationalized Farrell-Jones assembly map} for~$K_n(\IZ G)$, $n\in\IZ$, is a homomorphism
\begin{equation}
\label{eq:FJ}
\hspace{-1pt}\bigoplus_{C\in(\FinCyc)}
\bigoplus_{\substack{s\geq0, t\geq-1\\s+t=n}}
H_s(BZ_GC;\IQ)\tensor_{\IQ[W_G C]}\Theta_C(K_t(\IZ C)\tensor_\IZ\IQ)
\TO
K_n(\IZ G)\tensor_\IZ\IQ
\,.
\end{equation}
Here $(\FinCyc)$ denotes the set of conjugacy classes of finite cyclic subgroups in~$G$, $Z_G C$ denotes the centralizer in~$G$ of~$C$, $W_G C$ denotes the quotient $N_G C/Z_G C$ of the normalizer modulo the centralizer, and $\Theta_C(K_t(\IZ C)\tensor_\IZ\IQ)$ is a direct summand of~$K_t(\IZ C)\tensor_\IZ\IQ$ naturally isomorphic to
\begin{equation*}
\coker\left(
\bigoplus_{D\lneqq C} K_t(\IZ D)\tensor_\IZ\IQ
\TO K_t(\IZ C)\tensor_\IZ\IQ
\right)
\!.
\end{equation*}
The dimensions of the $\IQ$-vector spaces $\Theta_C(K_n(\IZ C)\tensor_\IZ\IQ)$ can be explicitly computed; see~\cite{Patronas}*{Theorem on page~9}.

Moreover, the summand in the source of~\eqref{eq:FJ} corresponding to~$C=1$ is the same as the source of~\eqref{eq:classical}, since $K_{-1}(\IZ)=0$.
Therefore, if $G$ is torsion-free, then the classical and the Farrell-Jones assembly maps are the same.

\begin{conjecture}[Rationalized Farrell-Jones Conjecture]
\label{conj:FJ}
For any group~$G$ and for any~$n\in\IZ$, the rationalized Farrell-Jones assembly map~\eqref{eq:FJ} is an isomorphism.
In particular, if $G$ is torsion-free, then the map~\eqref{eq:classical} is an isomorphism for any~$n\in\IZ$.
\end{conjecture}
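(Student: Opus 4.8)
The plan is to deduce this rationalized statement from the \emph{integral} Farrell-Jones Conjecture via the equivariant Chern character, and thereby to locate precisely where the difficulty lies. Recall that the integral conjecture asserts that for every group~$G$ the assembly map
\[
H_n^G(E_{\mathcal{VC}}G;\mathbf{K}_\IZ)\TO H_n^G(\mathrm{pt};\mathbf{K}_\IZ)=K_n(\IZ G)
\]
induced by the projection to a point of the classifying space~$E_{\mathcal{VC}}G$ for the family~$\mathcal{VC}$ of virtually cyclic subgroups is an isomorphism; here $H_*^G(-;\mathbf{K}_\IZ)$ is the $G$-equivariant homology theory associated to the algebraic $K$-theory spectrum of~$\IZ$. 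Tensoring this map with~$\IQ$ and computing its source identifies it with~\eqref{eq:FJ}: by the equivariant Chern character of Lück (see~\cite{LR}) the group $H_n^G(E_\Fin G;\mathbf{K}_\IZ)\tensor\IQ$ splits as a direct sum over the conjugacy classes of finite cyclic subgroups, with summands $\bigoplus_{s+t=n}H_s(BZ_GC;\IQ)\tensor_{\IQ[W_GC]}\Theta_C\bigl(K_t(\IZ C)\tensor\IQ\bigr)$, while the rational passage from the family~$\Fin$ to the family~$\mathcal{VC}$ supplies the remaining $t=-1$ contributions. Thus \eqref{eq:FJ} \emph{is} the rationalization of the integral assembly map, and the conjecture to be proved is exactly that this rationalization is an isomorphism.

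First I would separate the two halves of the isomorphism claim. Rational \emph{injectivity} of the assembly map is by far the more accessible direction: it holds for enormous classes of groups through detection by trace maps into topological cyclic and cyclic homology, and it is precisely this input---as packaged in~\cite{LRRV}---that, combined with Theorem~\ref{thm:T} and Corollary~\ref{cor:T}, yields Theorem~\ref{thm:K} for~$T$. The genuine content of the present conjecture is therefore \emph{surjectivity}, i.e.\ that every rationalized $K$-class is assembled from the finite cyclic subgroups. For this I would invoke the controlled-algebra machinery of Bartels, Lück, and Reich, which establishes the integral conjecture---and hence its rationalization---whenever $G$ acts suitably on a finite-dimensional flow space admitting, for each control datum, a $G$-equivariant open cover that is simultaneously long and thin in the flow direction, of bounded dimension, and subordinate to~$\mathcal{VC}$, together with compatible transfer maps. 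This strategy succeeds for Gromov hyperbolic groups~\cite{BLR} and for CAT(0)-groups~\cite{BL}, among others.

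The main obstacle is that no such construction is available for an arbitrary group, and there is at present no proof of the integral conjecture---or even of its rationalization---``for any group~$G$'' as stated; this is exactly why the assertion remains a conjecture rather than a theorem. The geometric hypotheses underlying the controlled-algebra method have no evident counterpart for a group presented purely algebraically, and I do not expect the one-dimensional rotation-number dynamics used to prove Theorem~\ref{thm:T} to suffice even for~$T$ itself. The realistic target is thus not a uniform proof but the verification of the conjecture for individual groups~$G$, whereupon the reductions above deliver~\eqref{eq:FJ} as an isomorphism for those~$G$. Finally, the ``in particular'' clause is immediate once the general statement is granted: when $G$ is torsion-free the only conjugacy class of finite cyclic subgroups is~$C=1$, so $W_G 1$ is trivial and $\Theta_1\bigl(K_t(\IZ)\tensor\IQ\bigr)=K_t(\IZ)\tensor\IQ$; since $K_{-1}(\IZ)=0$ only the terms with $t\geq0$ survive, and~\eqref{eq:FJ} reduces to the classical assembly map~\eqref{eq:classical}.
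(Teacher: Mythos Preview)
The statement you were asked to prove is labeled \emph{Conjecture} in the paper, and the paper offers no proof of it whatsoever; it is stated as an open problem, with the remark immediately following that it is known for Gromov hyperbolic and CAT(0)-groups. You recognized this correctly and, instead of fabricating an argument, gave an accurate account of how the rationalized conjecture sits inside the integral Farrell-Jones framework via the equivariant Chern character, why surjectivity is the hard part, and why no general proof exists. That is the right call.

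Your derivation of the ``in particular'' clause is also correct and matches the paper's own reasoning (stated just before the conjecture): for torsion-free~$G$ the only finite cyclic subgroup is trivial, $K_{-1}(\IZ)=0$, and the Farrell-Jones source collapses to the classical one. One minor quibble: the paper treats this reduction as a remark about the \emph{sources} of the two assembly maps coinciding, independent of the truth of the conjecture, whereas you phrase it as a consequence ``once the general statement is granted''; both are fine, but the paper's formulation is slightly sharper since the identification of \eqref{eq:classical} with \eqref{eq:FJ} in the torsion-free case is unconditional.
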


Conjecture~\ref{conj:FJ}, even in its much stronger integral version that we are not discussing here, is known to be true for all Gromov hyperbolic groups \cite{BLR} and all CAT(0)-groups~\cites{BL, Wegner}, for example.

Theorem~\ref{thm:T} and Corollary~\ref{cor:T} have the following immediate consequence.

\begin{corollary}
\label{cor:FJT}
The source of the rationalized Farrell-Jones assembly map for Thompson's group~$T$ is isomorphic to
\begin{equation}
\label{eq:FJT}
\bigoplus_{k\geq0}
\bigoplus_{\substack{s\geq0, t\geq-1\\s+t=n}}
H_s(BT;\IQ)\tensor_{\IQ}\Theta_{C_k}(K_t(\IZ C_k)\tensor_\IZ\IQ)
\end{equation}
for any~$n\in\IZ$.
In particular, if the Farrell-Jones conjecture is true for~$T$, then $K_n(\IZ T)\tensor_\IZ\IQ$ is isomorphic to~\eqref{eq:FJT} for any~$n\in\IZ$.
\end{corollary}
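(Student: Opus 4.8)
The plan is to derive Corollary~\ref{cor:FJT} by substituting the structural information of Theorem~\ref{thm:T} and Corollary~\ref{cor:T} into the general formula~\eqref{eq:FJ} for the source of the rationalized Farrell-Jones assembly map, specialized to $G=T$. First I would use the first sentence of Theorem~\ref{thm:T}: since every finite subgroup of~$T$ is cyclic and for each order there is exactly one conjugacy class, the outer index set~$(\FinCyc)$ in~\eqref{eq:FJ} is in bijection with the set of orders of finite cyclic subgroups of~$T$, so the outer direct sum in~\eqref{eq:FJ} becomes $\bigoplus_{k\geq0}$, with the summand indexed by~$k$ computed from the cyclic subgroup $C_k$ (realized for $k\geq2$ by the pseudo-rotation $\gamma_k$ of Example~\ref{ex:pseudo-rot}).

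Next I would simplify the coefficients of the summand attached to $C=C_k$. By the ``moreover'' part of Theorem~\ref{thm:T}, $Z_TC=N_TC$, so the Weyl group $W_TC=N_TC/Z_TC$ is trivial; hence $\IQ[W_TC]=\IQ$ and the tensor product $\tensor_{\IQ[W_TC]}$ appearing in~\eqref{eq:FJ} is an ordinary tensor product over~$\IQ$. Then I would invoke Corollary~\ref{cor:T}, which provides an isomorphism $H_s(BZ_TC;\IQ)\cong H_s(BT;\IQ)$ for every $s\geq0$; this is induced by the projection in the short exact sequence~\eqref{eq:T} and follows from the collapse of the corresponding Lyndon-Hochschild-Serre spectral sequence, which happens because $C$ is finite and so $H_q(C;\IQ)=0$ for $q>0$. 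Substituting these two facts into~\eqref{eq:FJ} replaces the general summand $H_s(BZ_TC_k;\IQ)\tensor_{\IQ[W_TC_k]}\Theta_{C_k}(K_t(\IZ C_k)\tensor_\IZ\IQ)$ by $H_s(BT;\IQ)\tensor_\IQ\Theta_{C_k}(K_t(\IZ C_k)\tensor_\IZ\IQ)$, which is precisely the expression~\eqref{eq:FJT}.

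The ``in particular'' clause is then immediate: Conjecture~\ref{conj:FJ} asserts that the assembly map~\eqref{eq:FJ} is an isomorphism, and its target is $K_n(\IZ T)\tensor_\IZ\IQ$, so if the rationalized Farrell-Jones conjecture holds for~$T$, then $K_n(\IZ T)\tensor_\IZ\IQ$ is isomorphic to the source~\eqref{eq:FJT}. There is no genuine obstacle in this argument---it is an unwinding of~\eqref{eq:FJ}---but the one place to be slightly careful is that the identification of the source must be compatible with the $W_TC$-module structure built into~\eqref{eq:FJ}; this is automatic exactly because $Z_TC=N_TC$ forces $W_TC$ to be trivial, which is the feature of Theorem~\ref{thm:T} (beyond Matucci's result~\cite{Matucci}) that makes the application work. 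Finally, I would note that the spaces $\Theta_{C_k}(K_t(\IZ C_k)\tensor_\IZ\IQ)$ are finite-dimensional and explicitly computable by~\cite{Patronas}, so~\eqref{eq:FJT} is an entirely concrete object.
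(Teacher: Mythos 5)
Your proposal is correct and is exactly the argument the paper intends (the paper merely calls the corollary an ``immediate consequence'' of Theorem~\ref{thm:T} and Corollary~\ref{cor:T}): one conjugacy class of finite cyclic subgroups per order turns the outer sum into $\bigoplus_{k\geq0}$, the equality $Z_TC=N_TC$ makes $W_TC$ trivial so that $\tensor_{\IQ[W_TC]}$ becomes $\tensor_\IQ$, and Corollary~\ref{cor:T} replaces $H_s(BZ_TC;\IQ)$ by $H_s(BT;\IQ)$. Nothing is missing.
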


As we already remarked, thanks to theorems of Ghys-Sergiescu and Patronas, the dimension over~$\IQ$ of each individual summand in~\eqref{eq:FJT} is explicitly computable.

Now we recall a famous result about the injectivity of the rationalized classical assembly map.

\begin{theorem}[B\"okstedt-Hsiang-Madsen~\cite{BHM}]
\label{thm:BHM}
Let $G$ be any group, not necessarily torsion-free.
Assume that for every~$s\in\IN$ the abelian group $H_s(BG;\IZ)$ is finitely generated.
Then for every~$n\in\IN$ the rationalized classical assembly map~\eqref{eq:classical} is injective.
\end{theorem}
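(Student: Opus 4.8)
The plan is to follow the trace-method argument of B\"okstedt, Hsiang, and Madsen~\cite{BHM}, which I now outline; the idea is to detect the rationalized assembly map by the cyclotomic trace to topological cyclic homology. First I would pass from the integral group ring to the spherical group ring $\IS[G] = \Sigma^\infty_+ G$ and to Waldhausen's algebraic $K$-theory of spaces $A(BG) = K(\IS[G])$: the linearization $A(BG) \to K(\IZ G)$ is compatible with the assembly maps, and the linearization at a point $A(\ast) = K(\IS) \to K(\IZ)$ is a rational equivalence by a theorem of Waldhausen, so the statement to prove becomes that the composite
\[
\Sigma^\infty_+ BG \wedge A(\ast) \TO A(BG) \TO[\mathrm{trc}] \mathrm{TC}(BG)
\]
of the $A$-theory assembly map and the cyclotomic trace---both natural in $BG$, hence compatible with assembly---is injective after tensoring with~$\IQ$.

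The geometric heart of the argument is the identification of topological Hochschild homology of a group ring with the suspension spectrum of a free loop space: $\mathrm{THH}(\IS[G]) \simeq \Sigma^\infty_+\mathcal{L}BG$, where $\mathcal{L}BG = \mathrm{map}(S^1,BG) \simeq \coprod_{[g]} BZ_G(g)$, the coproduct running over the conjugacy classes of elements of~$G$. Under this identification the circle action on $\mathrm{THH}$ is rotation of loops, the cyclotomic Frobenius maps are the $p$-th power maps on loops, and the constant loops---the component of the trivial conjugacy class---form a copy of $BG$ with trivial circle action that is preserved by the entire cyclotomic structure. Consequently the constant-loops subspectrum contributes to $\mathrm{TC}(BG)$ a summand assembled only from $\Sigma^\infty_+ BG$ and $\mathrm{TC}(\IS)$, and this is the summand that the assembly map hits. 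I would use the finiteness hypothesis precisely here: since each $H_s(BG;\IZ)$ is finitely generated, the homotopy-fixed-point and Tate spectral sequences computing the $S^1$-equivariant and cyclotomic homotopy of $\Sigma^\infty_+\mathcal{L}BG$ converge, uniformly enough that the splitting off of the constant-loops summand survives the homotopy limit defining $\mathrm{TC}$. Rationally this produces a retraction $\mathrm{TC}(BG)\tensor_\IZ\IQ \to (\Sigma^\infty_+ BG \wedge \mathrm{TC}(\IS))\tensor_\IZ\IQ$ under which the composite above becomes the map induced by the cyclotomic trace $A(\ast)\to\mathrm{TC}(\IS)$ at a point.

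To conclude, I would invoke the rational injectivity of this trace at a point, $A(\ast)\to\mathrm{TC}(\IS)$---a computation in \cite{BHM} comparing the (known) rational homotopy of $A(\ast)$, which agrees with that of $K(\IZ)$, with the (known) rational homotopy of $\mathrm{TC}(\IS)$---and observe that smashing with $\Sigma^\infty_+ BG$ preserves injectivity on rational homotopy groups by a K\"unneth argument, since $H_*(BG;\IQ)$ is a $\IQ$-vector space. Together with the retraction, this gives rational injectivity of the assembly map, hence of~\eqref{eq:classical}. The step I expect to be the main obstacle is the middle one: unravelling the cyclotomic structure on $\Sigma^\infty_+\mathcal{L}BG$ and the homotopy limits and spectral sequences it governs, well enough to isolate the constant-loops summand of $\mathrm{TC}(BG)$, is the technical core of \cite{BHM} and the one place where the finite-generation of the integral homology of $BG$ is genuinely indispensable; the reductions to the sphere spectrum and the computation at a point, although they rest on substantial known results, are less delicate.
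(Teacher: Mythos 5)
The paper does not prove this statement: Theorem~\ref{thm:BHM} is imported verbatim from~\cite{BHM}, so there is no internal proof to compare yours against, and what you have written is a sketch of the argument of the cited paper. Your outline does capture its overall shape --- the cyclotomic trace, the identification $\mathrm{THH}(\IS[G])\simeq\Sigma^\infty_+\mathcal{L}BG$ with the conjugacy-class decomposition of the free loop space, the splitting off of the constant loops, and the role of the finiteness hypothesis in commuting the homotopy limits defining $\mathrm{TC}$ with $\Sigma^\infty_+BG\wedge-$. But two steps are genuinely gapped. First, your opening reduction only detects the $A$-theory assembly map: rational injectivity of $\Sigma^\infty_+BG\wedge A(*)\to A(BG)\to\mathrm{TC}$ proves injectivity of the assembly map \emph{into $A(BG)$}, and the rational equivalence $A(*)\simeq_{\IQ}K(\IZ)$ identifies the sources, but nothing you say rules out the image dying under the linearization $A(BG)\to K(\IZ G)$, which is not rationally injective in general. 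The detection must factor through $K(\IZ G)$ itself: one composes the trace $K(\IZ G)\to\mathrm{TC}(\IZ G;p)$ with the constant-loops projection on $\mathrm{TC}(\IZ G;p)$ (the cyclic-nerve splitting works over $\IZ$, not only over $\IS$), and uses naturality under $\IS\to\IZ$ to identify the resulting composite with $\id_{BG_+}\wedge\,(K(\IZ)\to\mathrm{TC}(\IZ;p))$.

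Second, the ``computation at a point'' is not a computation: knowing the rational homotopy of $A(*)$ and of $\mathrm{TC}(\IS;p)$ (or $\mathrm{TC}(\IZ;p)$) separately does not show that the trace between them is injective. The input actually needed is the rational injectivity of $K_t(\IZ)\to\prod_{p\in\IP}K_t(\IZ_p;\IZ_p)$, i.e., the non-vanishing of $p$-adic regulators due to Soul\'e --- exactly the case $k=1$ of hypothesis~\ref{LRRV-Schneider} of Theorem~\ref{thm:LRRV}, which the paper notes is a theorem for $k=1$ but is a form of the open Leopoldt--Schneider conjecture for general cyclotomic fields. Relatedly, $\mathrm{TC}$ in~\cite{BHM} means the $p$-completed $\mathrm{TC}(-;p)$, one prime at a time; ``rationalizing $\mathrm{TC}(BG)$'' as you propose does not commute with the defining homotopy limits, and it is precisely the finite generation of the integral groups $H_s(BG;\IZ)$ (not merely of $H_s(BG;\IQ)$) that allows one to pass from $p$-adic detection at every prime to the rational conclusion for the finitely generated homotopy groups of $\Sigma^\infty_+BG\wedge K(\IZ)$.
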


In particular, Theorem~\ref{thm:BHM} applies to Thompson's group~$F$, since any group of type~$F_\infty$ satisfies the assumption above.
However, this injectivity result produces no information about~$\Wh(G)$.

In~\cite{LRRV}, Theorem~\ref{thm:BHM} is generalized to the Farrell-Jones assembly map, yielding also information about~$\Wh(G)$.

\begin{theorem}[\cite{LRRV}*{Main Theorem~1.13}]
\label{thm:LRRV}
Let $G$ be any group.
Assume that for every finite cyclic subgroup $C$ of~$G$ the following conditions hold:
\begin{enumerate}[label=(\roman*)]
\item\label{LRRV-finiteness}
 for every~$s\in\IN$ the abelian group~$H_s(BZ_GC;\IZ)$ is finitely generated;
\item\label{LRRV-Schneider}
let $k$ be the order of~$C$ and let $\zeta_k$ be any primitive $k$\textsuperscript{th} root of unity; for every~$t\in\IN$ the natural homomorphism
\[
K_t(\IZ[\zeta_k])\longrightarrow
\prod_{p\in\IP}K_t\Bigl(\IZ_p\tensor_\IZ\IZ[\zeta_k];\IZ_p\Bigr)
\]
is injective after tensoring with~$\IQ$, where $\IP$ denotes the set of all primes and $\IZ_p$ denotes the ring of $p$-adic integers for~$p\in\IP$.
\end{enumerate}
Then the restriction of the rationalized Farrell-Jones assembly map~\eqref{eq:FJ} to the summands where~$t\neq-1$ induces an injective homomorphism
\begin{equation*}
\bigoplus_{C\in(\FinCyc)}
\bigoplus_{\substack{s,t\geq0\\s+t=n}}
H_s(BZ_GC;\IQ)\tensor_{\IQ[W_G C]}\Theta_C(K_t(\IZ C)\tensor_\IZ\IQ)
\TO
K_n(\IZ G)\tensor_\IZ\IQ
\end{equation*}
for every~$n\geq0$.
\end{theorem}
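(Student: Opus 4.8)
The plan is to detect the image of the assembly map through the cyclotomic trace to topological cyclic homology, reducing the injectivity statement to one amenable to the methods of B\"okstedt-Hsiang-Madsen (Theorem~\ref{thm:BHM}). First I would invoke the cyclotomic trace $\mathrm{trc}\colon\mathbb{K}(\IZ G)\TO\mathrm{TC}(\IZ G)$. It is a natural transformation between the functors on the orbit category that define the two equivariant homology theories, and therefore it commutes with the Farrell-Jones assembly maps, producing a commutative square in which the K-theoretic map~\eqref{eq:FJ} maps to the corresponding assembly map for~$\mathrm{TC}$. Injectivity of the K-theoretic map on the summands with $t\neq-1$ would then follow from two facts: that the trace is rationally injective on the source restricted to those summands, and that the $\mathrm{TC}$-theoretic assembly map is injective on the resulting image. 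The theorem of Dundas-Goodwillie-McCarthy, which identifies relative K-theory with relative~$\mathrm{TC}$ along nilpotent ideals, is what makes the trace a sufficiently faithful approximation for this to work.

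For the second fact I would use that, rationally, $\mathrm{THH}$ and $\mathrm{TC}$ of a group ring split along conjugacy classes of group elements, the torsion elements -- hence the finite cyclic subgroups -- contributing the relevant ``singular'' part. This conjugacy-class decomposition makes the Farrell-Jones assembly map for $\mathrm{TC}$ relative to the family of finite cyclic subgroups rationally split injective, and organizes its source exactly as a sum over~$(\FinCyc)$ indexed by the centralizers~$Z_G C$ and twisted by the Weyl-group action, matching the shape of~\eqref{eq:FJ}.

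The two hypotheses then feed the two tensor factors of each summand. Hypothesis~\ref{LRRV-finiteness}, finite generation of $H_s(BZ_G C;\IZ)$, is precisely the B\"okstedt-Hsiang-Madsen input of Theorem~\ref{thm:BHM} applied to each centralizer: it forces the relevant completion and $\lim^1$ terms to vanish, so that the trace detects the classes in $H_s(BZ_G C;\IQ)$. Hypothesis~\ref{LRRV-Schneider} governs the coefficients: using the rational decomposition of~$\IQ C$ and the identification of $\Theta_C(K_t(\IZ C)\tensor_\IZ\IQ)$ with the ``new'' summand coming from~$\IZ[\zeta_k]$, the injectivity of $K_t(\IZ[\zeta_k])\TO\prod_{p\in\IP}K_t(\IZ_p\tensor_\IZ\IZ[\zeta_k];\IZ_p)$ ensures that these coefficient groups are themselves seen by the $p$-adic, and hence $\mathrm{TC}$-theoretic, information. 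The restriction to $t\neq-1$ is forced here: the summand with $t=-1$ records~$K_{-1}$ of the finite cyclic subgroups, which is invisible to the cyclotomic trace rationally -- the trace lands in connective~$\mathrm{TC}$ -- so the trace method cannot detect it.

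The main obstacle is the coefficient-level detection governed by hypothesis~\ref{LRRV-Schneider}: proving that the cyclotomic trace is rationally injective on $\Theta_C(K_t(\IZ C)\tensor_\IZ\IQ)$ requires a delicate comparison of the K-theory of the cyclotomic integers~$\IZ[\zeta_k]$ with $p$-adic K-theory and~$\mathrm{TC}$, of Quillen-Lichtenbaum type, which is exactly what hypothesis~\ref{LRRV-Schneider} encodes. Interlacing this detection with the equivariant bookkeeping -- the Weyl-group twisting $\tensor_{\IQ[W_G C]}$ and the compatibility of the conjugacy-class decomposition of~$\mathrm{TC}$ with the assembly map -- is the technical heart of the argument.
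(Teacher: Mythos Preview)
This theorem is not proved in the paper at all: it is quoted verbatim as \cite{LRRV}*{Main~Theorem~1.11} and used as a black box to deduce Corollary~\ref{cor:K}. So there is no ``paper's own proof'' to compare your proposal against. Your sketch is a reasonable outline of the strategy actually carried out in~\cite{LRRV}---cyclotomic trace to~$\mathrm{TC}$, conjugacy-class splitting of~$\mathrm{THH}/\mathrm{TC}$ of group rings, B\"okstedt--Hsiang--Madsen input for each centralizer, and the number-theoretic hypothesis~\ref{LRRV-Schneider} to control the coefficient summands---but within the present paper the appropriate ``proof'' is simply a citation.
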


\begin{corollary}[\cite{LRRV}*{Theorem~1.1}]
\label{cor:LRRV}
Assume that a group~$G$ satisfies assumption~\ref{LRRV-finiteness} of Theorem~\ref{thm:LRRV}.
Then there is an injective homomorphism
\[
\colim_{H\in\Sub_\Fin G}\Wh(H)\tensor_\IZ\IQ
\TO
\Wh(G)\tensor_\IZ\IQ
\,,
\]
i.e., the second part of~Conjecture~\ref{conj:Wh} is true for~$G$.
\end{corollary}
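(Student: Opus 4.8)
The plan is to deduce this from Theorem~\ref{thm:LRRV} by specializing to degree $n=1$ and matching the source of the restricted rationalized Farrell--Jones assembly map with the direct sum of $H_1(BG;\IQ)$ and $\colim_{H\in\Sub_\Fin G}\Wh(H)\tensor_\IZ\IQ$, and then passing to the cokernel of the trivial units.

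First I would unravel the source of the restricted assembly map in degree $n=1$: the relevant summands, those with $s+t=1$ and $t\neq-1$, are indexed by $(s,t)\in\{(1,0),(0,1)\}$. For $(s,t)=(1,0)$ I would use that $\widetilde{K}_0(\IZ C)$ is finite for every finite cyclic group~$C$, so that $\bigoplus_{D\lneqq C}K_0(\IZ D)\tensor_\IZ\IQ\TO K_0(\IZ C)\tensor_\IZ\IQ$ is onto; hence $\Theta_C(K_0(\IZ C)\tensor_\IZ\IQ)=0$ unless $C=1$, and the only surviving term is the $C=1$ term $H_1(BG;\IQ)\tensor_\IQ\Theta_1(K_0(\IZ)\tensor_\IZ\IQ)\cong H_1(BG;\IQ)$. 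For $(s,t)=(0,1)$ I would use that a finite cyclic group~$C$ has $C^{\mathrm{ab}}\tensor_\IZ\IQ=0$, so the surjection $K_1(\IZ C)\TO\Wh(C)$ becomes an isomorphism after $\tensor_\IZ\IQ$ and, compatibly with the induction maps, $\Theta_C(K_1(\IZ C)\tensor_\IZ\IQ)\cong\Theta_C(\Wh(C)\tensor_\IZ\IQ)$; thus this summand is $\bigoplus_{C\in(\FinCyc)}H_0(BZ_GC;\IQ)\tensor_{\IQ[W_GC]}\Theta_C(\Wh(C)\tensor_\IZ\IQ)$. By the standard rational splitting of proper equivariant homology (Artin induction, as used in~\cite{LRRV}), this last group is naturally isomorphic to $\colim_{H\in\Sub_\Fin G}\Wh(H)\tensor_\IZ\IQ$.

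Next I would track the passage from $K_1$ to $\Wh$. The quotient map $K_1(\IZ G)\TO\Wh(G)$ becomes, after $\tensor_\IZ\IQ$, a surjection whose kernel is the image of $G^{\mathrm{ab}}\tensor_\IZ\IQ=H_1(BG;\IQ)$, the rationalized subgroup of trivial units; equivalently $\Wh(G)\tensor_\IZ\IQ=\coker\bigl(H_1(BG;\IQ)\TO K_1(\IZ G)\tensor_\IZ\IQ\bigr)$. On the other hand the $C=1$ summand of the Farrell--Jones source is precisely the source~\eqref{eq:classical} of the classical assembly map, which in degree~$1$ equals $H_1(BG;\IQ)$ (as $K_1(\IZ)\tensor_\IZ\IQ=0$) and whose image in $K_1(\IZ G)\tensor_\IZ\IQ$ is exactly that trivial-units subspace. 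Therefore, composing the degree-$1$ restricted assembly map with the projection onto $\Wh(G)\tensor_\IZ\IQ$ annihilates the $H_1(BG;\IQ)$ summand and induces exactly the map~\eqref{eq:Wh} tensored with~$\IQ$; and since Theorem~\ref{thm:LRRV} makes the total map injective, its restriction to the complementary summand $\colim_{H\in\Sub_\Fin G}\Wh(H)\tensor_\IZ\IQ$ still maps injectively into $\Wh(G)\tensor_\IZ\IQ$, because the image of that summand meets $\ker\bigl(K_1(\IZ G)\tensor_\IZ\IQ\TO\Wh(G)\tensor_\IZ\IQ\bigr)$, which is the image of the $C=1$ summand, only in~$0$.

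Finally I would check that Theorem~\ref{thm:LRRV} actually applies: its hypothesis~\ref{LRRV-finiteness} is our standing assumption, and its hypothesis~\ref{LRRV-Schneider} is needed only for the values $t\in\{0,1\}$ occurring above. For $t=0$ it holds trivially, since after $\tensor_\IZ\IQ$ the map in question is the rank isomorphism; for $t=1$ it is a classical fact, namely injectivity of the units of $\IZ[\zeta_k]$ into the product of the completed local units (see~\cite{LRRV}). I expect the genuine work here to be not any single deep input but the naturality bookkeeping in the two middle paragraphs: verifying that the four identifications used---finiteness of $\widetilde{K}_0$, vanishing of $C^{\mathrm{ab}}\tensor_\IZ\IQ$, the Artin-induction splitting of the colimit, and the classical-assembly/trivial-units comparison---are all compatible with the assembly maps, so that the injective homomorphism produced by Theorem~\ref{thm:LRRV} descends to precisely the map of Conjecture~\ref{conj:Wh} after rationalization.
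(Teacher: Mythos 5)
Your derivation is correct and matches the intended argument: the paper itself gives no proof of this corollary (it is quoted as~\cite{LRRV}*{Theorem~1.16}), but its subsequent remark---that assumption~\ref{LRRV-Schneider} holds automatically for $t=0,1$, ``which explains why the assumption is absent from \ldots Corollary~\ref{cor:LRRV}''---indicates exactly your route: specialize Theorem~\ref{thm:LRRV} to $n=1$, use finiteness of $\widetilde{K}_0(\IZ C)$ and $\Wh(C)\tensor_\IZ\IQ\cong K_1(\IZ C)\tensor_\IZ\IQ$ to identify the source with $H_1(BG;\IQ)\oplus\colim_{H\in\Sub_\Fin G}\Wh(H)\tensor_\IZ\IQ$, and quotient by the trivial units. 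The naturality bookkeeping you flag is indeed where the remaining work lies, and it is carried out in~\cite{LRRV}.
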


Some remarks are in order about assumption~\ref{LRRV-Schneider} of Theorem~\ref{thm:LRRV}.
First of all, \ref{LRRV-Schneider} is true for all~$k$ if~$t=0,1$, and for all~$t$ if~$k=1$.
This explains why the assumption is absent from Theorem~\ref{thm:BHM} and Corollary~\ref{cor:LRRV}.
Moreover, assumption~\ref{LRRV-Schneider} is conjecturally always true, in the sense that it is automatically satisfied if a weak version of the Leopoldt-Schneider conjecture holds for cyclotomic fields; see~\cite{LRRV}*{Section~2} for details.

Now Theorem~\ref{thm:T} and its Corollaries \ref{cor:T} and~\ref{cor:FJT}, combined with Theorem~\ref{thm:LRRV} and Corollary~\ref{cor:LRRV}, immediately imply our main result; cf.~Theorem~\ref{thm:K}.

\begin{corollary}
\label{cor:K}
Conjecture~\ref{conj:Wh} is true for Thompson's group~$T$, and there is an injective homomorphism
\[
\colim_{k\in\IN}\Wh(C_k)\tensor_\IZ\IQ
\TO
\Wh(T)\tensor_\IZ\IQ
\,.
\]
In particular, $\Wh(T)\tensor_\IZ\IQ$ is an infinite dimensional $\IQ$-vector space.
Moreover, if assumption~\ref{LRRV-Schneider} of Theorem~\ref{thm:LRRV} holds for all~$k,t\in\IN$, then there is an injective homomorphism
\[
\bigoplus_{k\geq0}
\bigoplus_{\substack{s,t\geq0\\s+t=n}}
H_s(BT;\IQ)\tensor_{\IQ}\Theta_{C_k}(K_t(\IZ C_k)\tensor_\IZ\IQ)
\TO
K_n(\IZ T)\tensor_\IZ\IQ
\]
for all~$n\in\IN$.
\end{corollary}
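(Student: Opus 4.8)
The plan is to feed Theorem~\ref{thm:T} and its corollaries into Corollary~\ref{cor:LRRV} and Theorem~\ref{thm:LRRV}. First I would note that hypothesis~\ref{LRRV-finiteness} of Theorem~\ref{thm:LRRV} holds for $G=T$: for every finite cyclic subgroup $C<T$ and every $s\in\IN$ the group $H_s(BZ_TC;\IZ)$ is finitely generated, which is exactly Corollary~\ref{cor:T}. Since $\Wh$ only involves $K_0$ and $K_1$, hypothesis~\ref{LRRV-Schneider} plays no role here (as recalled after Corollary~\ref{cor:LRRV}), so Corollary~\ref{cor:LRRV} applies and produces an injective homomorphism $\colim_{H\in\Sub_\Fin T}\Wh(H)\tensor_\IZ\IQ\TO\Wh(T)\tensor_\IZ\IQ$; this is the rational form of Conjecture~\ref{conj:Wh} for~$T$.

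Next I would identify the source of this map with the left-hand side of~\eqref{eq:main}. By Theorem~\ref{thm:T} every finite subgroup of~$T$ is cyclic, and for each $k$ the cyclic subgroups of order~$k$ form a single conjugacy class, with representative $\langle\gamma_k\rangle$ (Example~\ref{ex:pseudo-rot}); hence the inclusion into $\Sub_\Fin T$ of the full subcategory on the objects $\{\langle\gamma_k\rangle\}_{k\in\IN}$ is an equivalence of categories, so the colimit may be computed over this subcategory. To pin down its morphisms, recall that a morphism $\langle\gamma_k\rangle\TO\langle\gamma_\ell\rangle$ is an element of $\inn(\langle\gamma_\ell\rangle)\backslash\conhom_T(\langle\gamma_k\rangle,\langle\gamma_\ell\rangle)$, which, since $\langle\gamma_\ell\rangle$ is abelian, is just the set of homomorphisms $x\mapsto gxg^{-1}$ with $g\in T$. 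Such a homomorphism forces $k\mid\ell$, because $g\langle\gamma_k\rangle g^{-1}$ is then a subgroup of $\langle\gamma_\ell\rangle$ of order~$k$; and when $k\mid\ell$, Theorem~\ref{thm:T} provides a $g\in T$ with $g\langle\gamma_k\rangle g^{-1}=\langle\gamma_{\ell}^{\ell/k}\rangle$, the unique order-$k$ subgroup of $\langle\gamma_\ell\rangle$. Moreover, since conjugation preserves rotation numbers and $\rono(\gamma_q)=\tfrac{1}{q}$ for every $q\geq2$ (Proposition~\ref{prop:KH}\ref{KH-torsion},~\ref{KH-conj}), the homomorphism must carry $\gamma_k$ to the unique element of $\langle\gamma_{\ell}^{\ell/k}\rangle$ of rotation number $\tfrac{1}{k}$, namely $\gamma_{\ell}^{\ell/k}$; hence there is exactly one morphism $\langle\gamma_k\rangle\TO\langle\gamma_\ell\rangle$. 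So this subcategory is the poset $\IN$ ordered by divisibility, with the transition maps $C_k=\IZ/k\IZ\TO C_\ell=\IZ/\ell\IZ$, $1\mapsto\tfrac{\ell}{k}$, and the colimit above is $\colim_{k\in\IN}\Wh(C_k)\tensor_\IZ\IQ$. This proves that~\eqref{eq:main} is injective.

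To see that $\Wh(T)\tensor_\IZ\IQ$ is infinite dimensional it suffices to show that $\colim_{k\in\IN}\Wh(C_k)\tensor_\IZ\IQ$ is. Each transition map $\Wh(C_k)\tensor_\IZ\IQ\TO\Wh(C_\ell)\tensor_\IZ\IQ$ is split injective: restriction of scalars along $\IZ C_k\hookrightarrow\IZ C_\ell$, over which $\IZ C_\ell$ is free of rank~$\ell/k$, induces a transfer $\Wh(C_\ell)\TO\Wh(C_k)$ whose composite with the inclusion-induced map is multiplication by~$\ell/k$. Since $\IN$ is directed under divisibility, the colimit of these injections contains each $\Wh(C_k)\tensor_\IZ\IQ$ as a subspace, and $\dim_\IQ(\Wh(C_k)\tensor_\IZ\IQ)$, which equals the $\IZ$-rank of $\Wh(C_k)$, is unbounded: it is at least $\tfrac{\varphi(k)}{2}-1$, the unit rank of the $k$-th cyclotomic field (see e.g.~\cite{Oliver}). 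Hence $\Wh(T)\tensor_\IZ\IQ$ is infinite dimensional.

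For the last statement I would apply Theorem~\ref{thm:LRRV} itself: hypothesis~\ref{LRRV-finiteness} again holds by Corollary~\ref{cor:T}, and hypothesis~\ref{LRRV-Schneider} holds for all $k,t\in\IN$ by assumption, so the homomorphism of Theorem~\ref{thm:LRRV} is injective for every $n\in\IN$. Its source is the $t\geq0$ part of~\eqref{eq:FJT}: by Corollary~\ref{cor:FJT}, or directly because for~$T$ the conjugacy classes of finite cyclic subgroups are indexed by their order~$k$, $W_TC=N_TC/Z_TC=1$ so that $\tensor_{\IQ[W_TC]}=\tensor_\IQ$, and $H_s(BZ_TC;\IQ)\cong H_s(BT;\IQ)$ (all from Theorem~\ref{thm:T} and Corollary~\ref{cor:T}). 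This yields the displayed injection and finishes the proof. Since the substantive work has already been done --- in Theorem~\ref{thm:T} and in~\cite{LRRV} --- the one step where I would expect to need genuine care is the identification, in the second paragraph, of $\Sub_\Fin T$ with the poset $\IN$ ordered by divisibility.
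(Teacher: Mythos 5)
Your proposal is correct and follows essentially the same route as the paper: verify hypothesis \ref{LRRV-finiteness} via Corollary~\ref{cor:T}, apply Corollary~\ref{cor:LRRV} and Theorem~\ref{thm:LRRV}, and identify $\colim_{H\in\Sub_\Fin T}\Wh(H)$ with $\colim_{k\in\IN}\Wh(C_k)$ by showing, using one conjugacy class per order and the rotation-number argument, that there is exactly one morphism between cyclic subgroups of orders $k\mid\ell$. Your third paragraph (split injectivity of the transition maps via transfer, unbounded ranks) actually supplies more detail for the infinite-dimensionality claim than the paper does, which simply cites Oliver for $\Wh(C_k)\tensor_\IZ\IQ\neq0$ when $k\notin\{1,2,3,4,6\}$.
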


\begin{proof}
The only step that remains to be explained is the identification
\begin{equation}
\label{eq:colim}
\colim_{k\in\IN}\Wh(C_k)
\cong
\colim_{H\in\Sub_\Fin T}\Wh(H)
\,,
\end{equation}
where on the left-hand side we have the colimit described right after Theorem~\ref{thm:K}.
Recall that all finite subgroups of~$T$ are cyclic.
Suppose that $C$ and~$D$ are finite subgroups of~$T$ of orders $k$ and~$\ell$, respectively, and assume that~$k\mid\ell$.
Then there is exactly one subgroup~$C'$ of~$D$ of order~$k$, and $C$ and~$C'$ are conjugate in~$T$ by Corollary~\ref{cor:conj}.
As explained in the proof of that Corollary, $C$ has a unique generator with rotation number~$\frac{1}{k}$, and the same is true for~$C'$.
Since rotation numbers are preserved by conjugation by Proposition~\ref{prop:KH}\ref{KH-conj}, we conclude that there is exactly one morphism in~$\Sub_\Fin T$ from~$C$ to~$D$.
Now, identifying~$C_k$ with the cyclic subgroup~$\langle\gamma_k\rangle$ of~$T$ generated by the pseudo-rotation of order~$k$ from Example~\ref{ex:pseudo-rot}, the isomorphism~\eqref{eq:colim} follows by cofinality.
\end{proof}


\begin{bibdiv}
\begin{biblist}[\normalsize]

\bib{BL}{article}{
   author={Bartels, Arthur},
   author={L\"uck, Wolfgang},
   title={The Borel conjecture for hyperbolic and CAT(0)-groups},
   journal={Ann. of Math. (2)},
   volume={175},
   date={2012},
   number={2},
   pages={631--689},
   issn={0003-486X},
   review={\MR{2993750}},
   doi={\DOI{10.4007/annals.2012.175.2.5}},
}

\bib{BLR}{article}{
   author={Bartels, Arthur},
   author={L\"uck, Wolfgang},
   author={Reich, Holger},
   title={The $K$-theo\-retic Farrell-Jones conjecture for hyperbolic groups},
   journal={Invent. Math.},
   volume={172},
   date={2008},
   number={1},
   pages={29--70},
   issn={0020-9910},
   review={\MR{2385666}},
   doi={\DOI{10.1007/s00222-007-0093-7}},
}

\bib{BHM}{article}{
   author={B\"okstedt, Marcel},
   author={Hsiang, Wu Chung},
   author={Madsen, Ib},
   title={The cyclotomic trace and algebraic $K$-theory of spaces},
   journal={Invent. Math.},
   volume={111},
   date={1993},
   number={3},
   pages={465--539},
   issn={0020-9910},
   review={\MR{1202133}},
   doi={\DOI{10.1007/BF01231296}},
}

\bib{Brown}{article}{
   author={Brown, Kenneth S.},
   title={Finiteness properties of groups},
   journal={J. Pure Appl. Algebra},
   volume={44},
   date={1987},
   number={1-3},
   pages={45--75},
   issn={0022-4049},
   review={\MR{885095}},
   note={Proceedings of the Northwestern conference on cohomology of groups (Evanston, Ill., 1985),
   DOI \DOI{10.1016/0022-4049(87)90015-6}},
}

\bib{BG}{article}{
   author={Brown, Kenneth S.},
   author={Geoghegan, Ross},
   title={An infinite-dimensional torsion-free $\textrm{FP}_\infty$ group},
   journal={Invent. Math.},
   volume={77},
   date={1984},
   number={2},
   pages={367--381},
   issn={0020-9910},
   review={\MR{752825}},
   doi={\DOI{10.1007/BF01388451}},
}

\bib{CFP}{article}{
   author={Cannon, James W.},
   author={Floyd, William J.},
   author={Parry, Walter R.},
   title={Introductory notes on Richard Thompson's groups},
   journal={Enseign. Math. (2)},
   volume={42},
   date={1996},
   number={3-4},
   pages={215--256},
   issn={0013-8584},
   review={\MR{1426438}},
   doi={\DOI{10.5169/seals-87877}},
}

\bib{book}{book}{
   author={Geoghegan, Ross},
   title={Topological methods in group theory},
   series={Graduate Texts in Mathematics},
   volume={243},
   publisher={Springer},
   place={New York},
   date={2008},
   pages={xiv+473},
   isbn={978-0-387-74611-1},
   review={\MR{2365352}},
   note={DOI \DOI{10.1007/978-0-387-74614-2}},
}

\bib{Ghys-Sergiescu}{article}{
   author={Ghys, \'Etienne},
   author={Sergiescu, Vlad},
   title={Sur un groupe remarquable de diff\'eomorphismes du cercle},
   journal={Comment. Math. Helv.},
   volume={62},
   date={1987},
   number={2},
   pages={185--239},
   issn={0010-2571},
   review={\MR{896095}},
   doi={\DOI{10.1007/BF02564445}},
}

\bib{KH}{book}{
   author={Katok, Anatole},
   author={Hasselblatt, Boris},
   title={Introduction to the modern theory of dynamical systems},
   series={Encyclopedia of Mathematics and its Applications},
   volume={54},
   publisher={Cambridge University Press},
   place={Cambridge},
   date={1995},
   pages={xviii+802},
   isbn={0-521-34187-6},
   review={\MR{1326374}},
   note={DOI \DOI{10.1017/CBO9780511809187}},
}

\bib{Lueck-ICM}{article}{
   author={L\"uck, Wolfgang},
   title={$K$- and $L$-theory of group rings},
   conference={
      title={Proceedings of the International Congress of Mathematicians},
   },
   book={
      publisher={Hindustan Book Agency},
      place={New Delhi},
      volume={II},
   },
   date={2010},
   pages={1071--1098},
   review={\MR{2827832}},
   doi={\DOI{10.1142/9789814324359_0087}}
 }

\bib{LR}{article}{
   author={L\"uck, Wolfgang},
   author={Reich, Holger},
   title={The Baum-Connes and the Farrell-Jones conjectures in $K$- and $L$-theory},
   conference={
      title={Handbook of $K$-theory},
   },
   book={
      publisher={Springer},
      place={Berlin},
      volume={2},
   },
   date={2005},
   pages={703--842},
   review={\MR{2181833}},
   doi={\DOI{10.1007/978-3-540-27855-9_15}},
}

\bib{LRRV}{article}{
   author={L\"uck, Wolfgang},
   author={Reich, Holger},
   author={Rognes, John},
   author={Varisco, Marco},
   title={Algebraic $K$-theory of group rings and the cyclotomic trace map},
   journal={Adv. Math.},
   volume={304},
   date={2017},
   pages={930--1020},
   issn={0001-8708},
   review={\MR{3558224}},
   doi={\DOI{10.1016/j.aim.2016.09.004}},
}

\bib{LRV}{article}{
   author={L\"uck, Wolfgang},
   author={Reich, Holger},
   author={Varisco, Marco},
   title={Commuting homotopy limits and smash products},
   journal={$K$-Theory},
   volume={30},
   date={2003},
   number={2},
   pages={137--165},
   issn={0920-3036},
   review={\MR{2064237}},
   doi={\DOI{10.1023/B:KTHE.0000018387.87156.c4}},
}

\bib{MPMN}{article}{
   author={Mart\'\i nez-P\'erez, Conchita},
   author={Matucci, Francesco},
   author={Nucinkis, Brita E.A.},
   title={Cohomological finiteness conditions and centralisers in generalisations of Thompson's group $V$},
   journal={Forum Math.},
   volume={28},
   date={2016},
   number={5},
   pages={909--921},
   issn={0933-7741},
   review={\MR{3543701}},
   doi={\DOI{10.1515/forum-2014-0176}},
}

\bib{MPN}{article}{
   author={Mart\'\i nez-P\'erez, Conchita},
   author={Nucinkis, Brita E.A.},
   title={Bredon cohomological finiteness conditions for generalisations of Thompson groups},
   journal={Groups Geom. Dyn.},
   volume={7},
   date={2013},
   number={4},
   pages={931--959},
   issn={1661-7207},
   review={\MR{3134031}},
   doi={\DOI{10.4171/GGD/211}},
}

\bib{Matucci}{thesis}{
   author={Matucci, Francesco},
   title={Algorithms and classification in groups~of piecewise-linear homeomorphisms},
   date={2008},
   type={Ph.D.\ thesis},
   organization={Cornell University},
   note={\arXiv{0807.2871}},
}

\bib{Oliver}{book}{
   author={Oliver, Robert},
   title={Whitehead groups of finite groups},
   series={London Mathematical Society Lecture Note Series},
   volume={132},
   publisher={Cambridge University Press, Cambridge},
   date={1988},
   pages={viii+349},
   isbn={0-521-33646-5},
   review={\MR{933091}},
   note={DOI \DOI{10.1017/CBO9780511600654}},
}

\bib{Patronas}{thesis}{
   author={Patronas, Dimitrios},
   title={The Artin defect in algebraic $K$-theory},
   series={London Mathematical Society Lecture Note Series},
   date={2014},
   type={Ph.D.~thesis},
   organization={Freie Universit\"at Berlin},
   note={\hurl{diss.fu-berlin.de/diss/receive/FUDISS_thesis_000000097443}},
}

\bib{Wegner}{article}{
   author={Wegner, Christian},
   title={The $K$-theoretic Farrell-Jones conjecture for CAT(0)-groups},
   journal={Proc. Amer. Math. Soc.},
   volume={140},
   date={2012},
   number={3},
   pages={779--793},
   issn={0002-9939},
   review={\MR{2869063}},
   doi={\DOI{10.1090/S0002-9939-2011-11150-X}},
}

\end{biblist}
\end{bibdiv}


\vfill
\small

\noindent\textsc{Department of Mathematical Sciences, Binghamton University, SUNY}\linebreak
\emph{E-mail address:} \hemail{ross@math.binghamton.edu}\\
\emph{URL:} \hurl[]{people.math.binghamton.edu/ross/}
\medskip

\noindent\textsc{Department of Mathematics and Statistics, University at Albany, SUNY}\linebreak
\emph{E-mail address:} \hemail{mvarisco@albany.edu}\\
\emph{URL:} \hurl{albany.edu/~mv312143/}


\end{document}